\newcommand{\cut}[1]{{}}
\newcommand{\vb}{{\mathbf{b}}}
\newcommand{\vv}{{\mathbf{v}}}
\newcommand{\vw}{{\mathbf{w}}}
\newcommand{\vx}{{\mathbf{x}}}
\newcommand{\vy}{{\mathbf{y}}}
\newcommand{\vz}{{\mathbf{z}}}
\newcommand{\vA}{{\mathbf{A}}}
\newcommand{\vI}{{\mathbf{I}}}
\newcommand{\vM}{{\mathbf{M}}}
\newcommand{\sign}{\mathrm{sign}}
\newcommand{\vzero}{\mathbf{0}}
\newcommand{\st}{{\text{s.t.}}} % subject to
\newcommand{\tr}{{\mathrm{tr}}} % trace
\let\@@span\span
\def\sp@n{\@@span\omit\advance\@multicnt\m@ne}
\newcommand{\bc}{\begin{center}}
\newcommand{\ec}{\end{center}}
\newcommand{\bdm}{\begin{displaymath}}
\newcommand{\edm}{\end{displaymath}}
\newcommand{\beq}{\begin{equation}}
\newcommand{\eeq}{\end{equation}}
\newcommand{\bfl}{\begin{flushleft}}
\newcommand{\efl}{\end{flushleft}}
\newcommand{\bt}{\begin{tabbing}}
\newcommand{\et}{\end{tabbing}}
\newcommand{\beqn}{\begin{align}}
\newcommand{\eeqn}{\end{align}}
\newcommand{\beqs}{\begin{align*}} % no equation numbers
\newcommand{\eeqs}{\end{align*}}  % no equation numbers
\def\email#1{\href{mailto:#1}{#1}}% When hyperref is used, otherwise outcomment
\newcommand{\h}[1]{\mathbf{#1}}
\newcommand{\R}{\mathbb{R}}
\providecommand{\norm}[1]{\left\lVert#1\right\rVert}
\DeclareMathOperator*{\supp}{supp}
\newcommand{\jq}[1]{{\color{magenta}#1 --JQ}}
\begin{document}

\title{A Novel Regularization Based on the Error Function for Sparse Recovery} %via the error function
%about the article that should go on the front page should be
%placed here. General acknowledgments should be placed at the end of the article.}
%\subtitle{Do you have a subtitle?\\ If so, write it here}
\titlerunning{Error Function for Sparse Signal Recovery}        % if too long for running head
\author{Weihong Guo \and Yifei Lou \and Jing Qin  \and Ming Yan}
%\authorrunning{Short form of author list} % if too long for running head
\institute{
	W. Guo\at
	Department of Mathematics, Applied Mathematics and Statistics\\
	Case Western Reserve University, Cleveland, OH 44106.\\
	\email{wxg49@case.edu}.\and
Y. Lou \at
Department of Mathematical Sciences\\
The University of Texas at Dallas, Richardson, TX 75080. \\
\email{yifei.lou@utdallas.edu}.
\and
J. Qin\at
 Department of Mathematics\\
 University of Kentucky, Lexington, KY 40506.\\
 \email{jing.qin@uky.edu}.
\and
M. Yan \at
Department of Computational Mathematics, Science and Engineering\\
Department of Mathematics\\
Michigan State University, East Lansing, MI 48824.\\
\email{yanm@math.msu.edu}.
}

\date{Received: date / Accepted: date}
% The correct dates will be entered by the editor

\maketitle

\begin{abstract}
Regularization plays an important role in solving ill-posed problems by adding extra information about the desired solution, such as sparsity. Many regularization terms usually involve some vector norm, e.g., $L_1$ and $L_2$ norms.
In this paper, we propose a novel regularization framework that uses the error function to approximate the unit step function. It  can be considered as a surrogate function for the $L_0$ norm. The asymptotic behavior of the error function with respect to its intrinsic parameter indicates that the proposed regularization can approximate the standard $L_0$, $L_1$ norms as the parameter approaches to $0$ and $\infty,$ respectively. Statistically, it is also less biased than the $L_1$ approach. We then incorporate the error function into either a constrained or an unconstrained model when recovering a sparse signal from an under-determined linear system. Computationally, both problems can be solved via an iterative reweighted $L_1$ (IRL1) algorithm with guaranteed convergence. A large number of experimental results demonstrate that the proposed approach outperforms the state-of-the-art methods in various sparse recovery scenarios.

\end{abstract}

\keywords{Error function \and Iterative reweighted $L_1$ \and Compressed sensing \and Sparsity \and Biaseness}
% \PACS{PACS code1 \and PACS code2 \and more}
\subclass{49N45 \and 90C05 \and 90C26}
%49N45 Inverse problem
%90C05 linear programming
%90C26 Nonconvex programming, global optimization

\section{Introduction}\label{sec:intro}
In these days, ``big data'' is ubiquitous due to developments and advancements of science and technologies. But on the other hand, one often faces  ``small data,'' when the amount of data that can be transmitted is limited by technical or economic restrictions.  For example, when a patient undergoes the computed tomography (CT) scanning, the amount of measurements that can be recorded can not exceed the maximum safe radiation dosage.
Mathematically speaking, a small data problem  corresponds to an under-determined (linear) system, where the number of measurements is considerably smaller than the ambient dimension. In this case, reasonable assumptions shall be taken into account and formulated as regularizations to refine the desired solution space.

In data science, a signal of interest is often assumed to be sparse (i.e. having a few non-zero elements) either by itself or after a linear transformation. One popular signal recovery technique based on sparsity is referred to as \textit{compressed sensing} (CS), coined by David Donoho \cite{donoho2006compressed}. CS enables data compression to facilitate data storage and transmission, as only a small portion of data, which are non-zero, is being processed. In order to find the sparsest signal, it is natural to minimize the $L_0$ norm\footnote{Note that $\|\cdot\|_0$ is a pseudo-norm, but is often called as the $L_0$ norm.}, i.e., the number of nonzero entries in a vector.
Unfortunately, the $L_0$ minimization is NP-hard \cite{natarajan95}, as it involves combinatorial search that is time-consuming, especially in high-dimensional spaces.  One of the most popular approaches in CS is to replace the $L_0$ norm by the convex $L_1$ norm, which often gives satisfactory results. This $L_1$ convex relaxation technique has been applied in many different fields such as geology and geophysics \cite{santosaS86}, Fourier transform spectroscopy \cite{mammone83}, and ultrasound imaging \cite{papoulisC79}. One major tool for analyzing CS algorithms is the restricted isometry property (RIP) \cite{CRT}, which provides a sufficient condition for exact recovery of a sparse signal by minimizing the $L_1$ norm.

The $L_1$ minimization in CS is closely related to least absolute shrinkage and selection operator (LASSO) \cite{tibshirani96lasso} in statistical learning. Assume that the data is generated by a linear regression model polluted by Gaussian noise, $\h b=A\h x+\varepsilon$, where each row of $A$ is a sample of feature vectors,  and $\h b$, $\varepsilon$ are response and noise, respectively. In this setting, one aims to find a sparse vector $\h x$ consisting of model coefficients, which is a reasonable assumption, since only a few features contribute to the response. However, Fan and Li  \cite{fan2001variable} pointed out that LASSO (or $L_1$) is biased towards  large coefficients. To mitigate the estimation bias, they proposed a nonconvex regularization, called smoothly clipped absolute deviation (SCAD). Later, many other nonconvex regularizations have emerged in statistics, such as capped $L_1$  (CL1) \cite{zhang2009multi,shen2012likelihood}, transformed $L_1$ (TL1) \cite{lv2009unified}, and minimax concave penalty (MCP) \cite{zhang2010nearly}. Some of these models have been adopted for sparse signal recovery \cite{louYX16,zhangX17,zhangX18}.

Nonconvex regularization terms can be further categorized into two groups: smooth and nonsmooth. In particular, Capped $L_1$, SCAD, and MCP are nonsmooth. Specifically their proximal functions are not continuous, which leads to numerical instability from the algorithmic point of view. Transformed $L_1$ is smooth except at zero and its proximal function is continuous. However, it has the bias issue as well, while  CL1, SCAD, and MCP yield an unbiased estimate, as they are constant for large component. We aim to propose a nonconvex regularization, which is smooth and less biased compared to $L_1$ and TL1; see Figure~\ref{fig:obj_fun} for a comparison among these regularization terms.

In this paper, we propose a novel nonconvex regularization based on the ERror Function (ERF) to promote sparsity. It is motivated from a graph-based approach \cite{bai2018graph} to enforce a bi-modal weight distribution when reconstructing a skeleton image.
We discover that their numerical scheme is equivalent to minimizing the error function via the iterative reweighted $L_1$ (IRL1) algorithm \cite{candes2008enhancing}. As a good approximation to the
Heaviside step function (or the unit step function), the error function can serve as a surrogate function for the $L_0$ norm, which has not been considered in the CS literature to the best of our knowledge. The major contributions of this paper are three-fold:
\begin{enumerate}
	\item[(a)] We propose a novel regularization based on the error function for sparse signal recovery and establish its connections to the standard $L_0$, $L_1$ regularizations;
	\item[(b)] We adapt the IRL1 algorithms to solve the proposed model in either a constrained or an unconstrained formulation with guaranteed convergence;
	\item[(c)] We conduct extensive experiments to demonstrate the superior performance of the proposed approaches.
\end{enumerate}

The rest of the paper is organized as follows. We review some existing models and related algorithms for sparse recover in Section~\ref{sect:review}. The proposed regularization is described in Section~\ref{sect:reg_ERF}, followed by numerical schemes in Section~\ref{sect:alg}. We present experimental results in Section~\ref{sect:exp}, showing that the proposed approaches outperform the state of the art in sparse recovery. Finally, conclusions and future works are given in Section~\ref{sect:conclude}.

\section{Preliminaries}\label{sect:review}
Throughout the paper, we use bold uppercase letters to denote matrices, bold lowercase letters to denote vectors, and lower case letters to denote vector or matrix entries, e.g., a vector $\vx$ with its $j$-th component by $x_j$. The set of all $n$-dimensional real vectors is denoted by $\mathbb{R}^n$. The $L_p$ norm of a vector $\vx\in\mathbb{R}^n$ is defined as $\|\vx\|_p=(\sum_{j=1}^n|x_j|^p)^{1/p}$ for $0<p\leq \infty$. The sign function applied to $\vx\in\mathbb{R}^n$ returns a vector, denoted by $\sign(\vx)$, whose $j$-th component is $x_j/|x_j|$ if $x_j\neq0$ and zero otherwise. Inequalities involving vectors are defined component-wise, e.g., $\vx\leq \vy$ meaning that each component of $\vx$ is less than or equal to the corresponding component of $\vy$. We use $\odot$ to denote the component-wise multiplication of two vectors. The set of all $m\times n$ real matrices is denoted by $\mathbb{R}^{m\times n}$. The kernel of a matrix $\vA\in\mathbb{R}^{m\times n}$ is defined as $\mathrm{ker}(\vA):=\{\vx\in\mathbb{R}^n:\vA\vx=\mathbf{0}\}$. The trace of a square matrix $\vA\in\R^{n\times n}$, denoted by $\tr(\vA)$, calculates the sum of all diagonal entries. 
$\vA_S$ is the submatrix with columns selected from the index set $S$ and $\vx_S$ is the subvector with components selected from the index set $S$.

\subsection{Sparsity-promoting models}
There are a variety of regularizers that can approximate the $L_0$ norm,  including $L_1$, $L_p$ with $0<p<1$ \cite{chartrand07,Xu2012}, capped $L_1$  \cite{zhang2009multi,shen2012likelihood,louYX16}, transformed $L_1$ \cite{lv2009unified,zhangX17,zhangX18},
 $L_1$-$L_2$ \cite{yinEX14,louYHX14}, and $L_1/L_2$ \cite{l1dl2,l1dl2accelerated}.
   In this paper, we focus on developing a separable regularization, which allows component-wise implementation to enhance computational efficiency. Besides the $L_1$ and $L_p$ with $0<p<1$, there are  other popular separable regularizations, some of which are listed as follows: for any $\vx\in\R^n$ and $a,\lambda,\gamma>0$
\begin{itemize}
	%\item  $L_1$: $$\|\h x\|_1=\sum_j |x_j|;$$
		
	%\item $L_p$ for $0<p<1$: $$\|\h x\|_p = \left(\sum_j |x_j|^p\right)^{1/p};$$
	
	\item Capped $L_1$  (CL1):
	\[
	J^{\mathrm{CL1}}_a(\h x) := \sum_{j=1}^n\min\{|x_j|,a\} ;
	\]
	
	\item Transformed $L_1$ (TL1):
	\[
	J^{\mathrm{TL1}}_a(\h x) := \sum_{j=1}^n \dfrac {(a+1)|x_j|}{a+|x_j|};
	\]

	\item Smoothly clipped absolute deviation (SCAD): $$J^{\mathrm{SCAD}}_{\lambda,\gamma}(\h x) := \sum_{j=1}^n \Phi_{\lambda,\gamma}^{\mathrm{SCAD}}(x_j)$$ with
	\[
	\Phi_{\lambda,\gamma}^{\mathrm{SCAD}}(x_j)=\left\{
	\begin{aligned}
	&\lambda|x_j|,&&|x_j|\leq\lambda;\\
	&\frac{2\gamma\lambda|x_j|-|x_j|^2-\lambda^2}{2(\gamma-1)},&&
	\lambda <|x_j|\leq \gamma\lambda;\\
	&\frac{(\gamma+1)\lambda^2}2,&&|x_j|>\gamma\lambda;
	\end{aligned}
	\right.
	\]

	\item Minimax concave penalty (MCP):
	$$J^{\mathrm{MCP}}_{\lambda,\gamma}(\h x) := \sum_{j=1}^n \Phi_{\lambda,\gamma}^{\mathrm{MCP}}(x_j)$$ with
	\[
	\Phi_{\lambda,\gamma}^{\mathrm{MCP}}(x_j)=
	\left\{
	\begin{aligned}
	&\lambda |x_j|-\frac{|x_j|^2}{2\gamma},&& |x_j|\leq\gamma
	\lambda;\\
	&\frac12\gamma\lambda^2,&& |x_j|>\gamma\lambda;
	\end{aligned}
	\right.
	\]	
%	\item $L_1$-$L_2$:
%	\[
%	\norm{\vx}_{1-2}=\sum_j|x_j|-\sqrt{\sum_j x_j^2};
%	\]
%\item  $L_1/L_2$:
%\[
%\norm{\vx}_{1/2}=\dfrac{\sum_j|x_j|}{\sqrt{\sum_j x_j^2}}.
%\]
\end{itemize}

It is straightforward that $L_p$ converges to $L_0$ and $L_1$ as $p$ goes to $0$ and $1$, respectively. By letting $a=0$ in TL1 and using the standard assumption of $\frac 0 0 = 0$, $J_0^{\mathrm{TL1}}$ is equivalent to the $L_0$ norm. On the other hand, we have
\[
\lim_{a\rightarrow \infty} J_a^{\mathrm{TL1}}(\h x)=\sum_{j=1}^n \lim_{a\rightarrow \infty}
\dfrac {(1+\frac 1 a)|x_j|}{1+\frac 1 a |x_j|} = \sum_j |x_j| = \|\h x\|_1.\]
Therefore, $J_a^{\mathrm{TL1}}$ approaches to the $L_0$ and $L_1$ norms by letting $a\rightarrow 0$ and $\infty$, respectively.

Both SCAD and MCP are proposed to correct the estimation bias caused by the $L_1$ approach. One criterion for an unbiased function is that its derivative has a horizontal asymptote at zero, as suggested in SCAD \cite{fan2001variable}. As $\Phi_{\lambda,\gamma}^{\mathrm{SCAD}}$ and $\Phi_{\lambda,\gamma}^{\mathrm{MCP}}$ become constant for a relatively large variable, both SCAD and MCP estimates are unbiased. However, one major drawback of SCAD and MCP is that there are two model parameters involved, which causes difficulties in parameter selection. The parameter-free models include $L_1,$ $L_1$-$L_2$, and $L_1/L_2$, the last of which also has a scale-invariant property to mimic the $L_0$ norm.

\subsection{Optimization techniques}

A fundamental problem in CS is to find a sparse vector subject to an under-determined linear system,
\begin{equation}\label{eq:CS}
\min_{\h x\in \R^n} \|\h x\|_0 \quad \st \quad  \vA\h x=\h b,
\end{equation}
where $\vA\in\mathbb R^{m\times n} (m\ll n)$ is called a \emph{sensing matrix} and $\h b\in\mathbb R^m$ denotes a measurement vector.
 Cand{\`e}s \emph{et al.}~proposed an iterative algorithm for reweighted $L_1$ minimization (IRL1) \cite{candes2008enhancing} as follows,
\begin{equation}\label{eqn:con_rwl14l0}
\left\{\begin{array}{l}
 w_j^{k} = {1\over{|x_j^k|+\epsilon}}\\
\h x ^{k+1} = \arg\min_{\h x\in\R^n} \sum_{j=1}^n w^{k}_j|x_j| \quad \st \quad \vA\h x = \h b,\\
\end{array}
\right.
\end{equation}
where a positive parameter $\epsilon$ is introduced for the sake of stability. From the perspective of a majorization-minimization (MM) framework \cite{LHY}, the iteration \eqref{eqn:con_rwl14l0} is in fact to minimize the following problem
\begin{equation}\label{eqn:log}
\min_{\h x\in \R^n} \sum_{j=1}^n \log(|x_j|+\epsilon) \quad \st \quad  \vA\h x=\h b.
\end{equation}
The objective function in \eqref{eqn:log} is often called a \emph{log-sum penalty function}, denoted by
\[
J^{\mathrm{log}}_\epsilon(\h x):=\sum_{j=1}^n \Phi^{\mathrm{log}}_\epsilon(x_j) \quad
\mbox{where}\quad \Phi^{\mathrm{log}}_\epsilon(x) = \log(|x|+\epsilon).
\]
Since $J^{\mathrm{log}}_\epsilon$ is a concave function on $\mathbb{R}^n$, we have
\[
J^{\mathrm{log}}_\epsilon(\h x )\leq J^{\mathrm{log}}_\epsilon(\h x^k) + \langle \nabla J^{\mathrm{log}}_\epsilon(\h x^k), \h x-\h x^k\rangle.
\]
Instead of directly minimizing $J^{\mathrm{log}}_\epsilon$, the MM framework considers the following iteration scheme
\[
\h x^{k+1} = \arg\min_{\h x\in \R^N} J^{\mathrm{log}}_\epsilon(\h x^k) + \langle \nabla J^{\mathrm{log}}_\epsilon(\h x^k), \h x-\h x^k\rangle,
\]
which is equivalent to \eqref{eqn:con_rwl14l0}.

The iterative reweighed algorithms are generalized in \cite{ochs2015iteratively}, where the authors considered a certain class of nonsmooth and nonconvex functions of the form
\begin{eqnarray}
\min_{\h x\in X} F_1(\h x)+ F_2(G(\h x)),
\end{eqnarray}
with a convex function $F_1$, a coordinate-wise convex function $G$,  a concave function $F_2$, and a feasible set  $X\subseteq\R^n$. The IRL1 algorithm can be expressed as
\begin{equation}\label{eqn:con_rwl14general}
\left\{\begin{array}{l}
w_j^{k} \in \partial F_2(\h y) \quad \mbox{with}\ \ \h y= G(\h x)\\
\h x ^{k+1} = \arg\min_{\h x\in X} F_1(\h x)+ \langle \h w^{k}, G(\h x)\rangle,\\
\end{array}
\right.
\end{equation}
where $\partial F_2$ denotes the subgradient of $F_2$.

\section{Proposed regularization} \label{sect:reg_ERF}

We propose a novel regularization to promote sparsity,
\begin{equation}\label{eqn:erf}
J^{\mathrm{ERF}}_{\sigma}(\h x) :=  \sum_{j=1}^n\Phi^{\mathrm{ERF}}_\sigma(|x_j|) \quad \mbox{with} \quad \Phi^{\mathrm{ERF}}_\sigma(x)=\int_0^{x}e^{-\tau^2/\sigma^2}d\tau,
\end{equation}
where $\h x\in\R^n$, $\sigma>0$. Note that the standard error function is defined as
\[
{\mathrm{erf}(x)} = \frac 2 {\sqrt\pi} \int_0^{x}e^{-\tau^2}d\tau,
\]
and hence $\Phi_\sigma^{\mathrm{ERF}}$ is a scaled error function in that
\begin{equation}
\Phi^{\mathrm{ERF}}_\sigma(x) = \sigma\int_0^{\frac{x}{\sigma}}e^{-\tau^2}d\tau = \frac{\sigma\sqrt{\pi}}{2} \mathrm{erf}\left(\frac{x}{\sigma}\right).
\end{equation}
We refer our model \eqref{eqn:erf}  as the ``ERF'' regularization. In what follows, we omit the superscript ``ERF'' in $J_\sigma$ and $\Phi_\sigma$, when the context clearly refers to the proposed regularization.

\subsection{Properties}

We list some useful properties about $\Phi_\sigma$ and $J_\sigma$, especially the asymptotical behaviors of $J_\sigma$ as characterized  in Theorem~\ref{thm:asym}.
\begin{itemize}
	\item The derivative of $\Phi_\sigma$ at $x$ is given by
	\begin{equation}\label{prop_erf}
	\frac{d}{dx}\Phi_\sigma(x) = \exp(-\frac{x^2}{\sigma^2}).
	\end{equation}
%	where $\partial |\cdot|$ denotes the subgradient of $|\cdot|.$
	%	{\color{red}[Any advantages of having a smooth function?]}

	\item The upper/lower bounds of $\Phi_\sigma$ are given by
	\begin{eqnarray}
	c\sqrt{1-e^{-ax^2}}\leq \Phi_\sigma(x)\leq c \sqrt{1-e^{-bx^2}},\quad \forall\,x\in\R,
	\end{eqnarray}
	where $a=1/\sigma^2$, $b=\pi/4\sigma^2$, and $c= \frac{\sigma\sqrt{\pi}}{2}$ based on the lower and upper bounds of the standard error function \cite{chu1955bounds}.

	\item  $J_\sigma$ is concave on $\R^n$, i.e., for any $t\in[0,1]$ and any $\h x,\h y\in\R^n$
	\[
	J_\sigma(t\h x+(1-t)\h y) \geq t J_\sigma(\h x)+(1-t)J_\sigma(\h y).
	\]

	\item $J_\sigma$ is subadditive or satisfies the triangle inequality, i.e.,
	\[
	J_\sigma(\h x+\h y) \leq J_\sigma(\h x)+J_\sigma(\h y),\quad \forall\,\h x,\h y\in\R^n.
	\]
    In addition, if $\vx,\vy\in\mathbb{R}^n$ have disjoint supports, then
\[
J_\sigma(\vx+\vy)=J_\sigma(\vx)+J_\sigma(\vy),
\]
which serves as a key in studying the $J_\sigma$-regularized minimization problem.
%{\color{red} [don't see why it is useful; may remove later. Same holds for the following remark. JQ: keep it for analyzing the NSP.]}
\end{itemize}

%%JQ added on 12/3
%\noindent
%\textbf{Remark:} Using the Taylor series, we have
%\[
%e^{-t^2/\sigma^2}=1-\frac{t^2}{\sigma^2}+\frac{t^4}{2\sigma^4}+\ldots.
%\]
%By taking an integral of the above equation over the interval $[0,|x_i|]$, we can rewrite the proposed regularizer as
%\[
%J_\sigma(\vx)=\sum_{i=1}^n\left(|x_i|-\frac{|x_i|^3}{3\sigma^2}+\frac{|x_i|^5}{10\sigma^4}+\ldots\right)
%\]
%which implies that
%\[
%J_{\sigma}(\vx)=\norm{\vx}_1-\frac{1}{3\sigma^2}\norm{\vx}_3^3+\frac1{10\sigma^4}\norm{\vx}_5^5+\ldots.
%\]
%In other words, $J_{\sigma}$ can be approximated by a linear combination of various $\ell_p$-norms with $p$ odd.

%\subsection{Asymptotic Behavior Discussion}
%Furthermore, we characterize

%Prove the property that $J_\sigma$ converges to $\|\cdot\|_0$ for $\sigma\rightarrow 0$ and $J_\sigma$ converges to $\|\cdot\|_1$ for $\sigma\rightarrow \infty$. {\color{red} [to MY: please?]}

\begin{theorem}\label{thm:asym}
	For any nonzero vector $\h x\in\R^n$, we have
	\begin{enumerate}
		\item[(a)] $J_\sigma(\vx)\rightarrow \|\vx\|_1,$ as $\sigma\rightarrow +\infty$;
		\item[(b)] $J_\sigma(\vx)/\sigma \rightarrow {\sqrt{\pi}\over 2} \|\vx\|_0,$ as $\sigma\rightarrow 0^+$.
	\end{enumerate}
\end{theorem}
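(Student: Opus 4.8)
The plan is to exploit the separability $J_\sigma(\vx)=\sum_{j=1}^n\Phi_\sigma(|x_j|)$, which reduces both limits to a scalar analysis of $\Phi_\sigma$ on the nonnegative reals. Since the sum is finite, the limit in $\sigma$ may be passed inside the summation with no convergence subtlety, so it suffices to determine $\lim_{\sigma\to\infty}\Phi_\sigma(t)$ and $\lim_{\sigma\to 0^+}\Phi_\sigma(t)/\sigma$ for a fixed $t\ge 0$, and then sum the componentwise limits.

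For part (a), I would work directly from the integral definition $\Phi_\sigma(t)=\int_0^t e^{-\tau^2/\sigma^2}\,d\tau$. On the compact interval $[0,t]$ the integrand is dominated by $1$ and converges pointwise to $1$ as $\sigma\to\infty$, so bounded convergence gives $\Phi_\sigma(t)\to t$; equivalently, the elementary estimate $0\le 1-e^{-\tau^2/\sigma^2}\le \tau^2/\sigma^2$ yields the quantitative bound $0\le t-\Phi_\sigma(t)\le t^3/(3\sigma^2)\to 0$. Applying this with $t=|x_j|$ and summing gives $J_\sigma(\vx)\to\sum_j|x_j|=\|\vx\|_1$. I would deliberately avoid leaning on the stated two-sided bound here, since its leading factor only pins the limit down to within a multiplicative constant and does not recover the sharp value $|x_j|$.

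For part (b), the scaled-erf representation $\Phi_\sigma(t)/\sigma=\frac{\sqrt{\pi}}{2}\,\mathrm{erf}(t/\sigma)$ is the key tool. A case split on whether the component vanishes then settles everything: if $x_j\ne 0$ then $|x_j|/\sigma\to+\infty$ and $\mathrm{erf}(|x_j|/\sigma)\to 1$, so the term tends to $\frac{\sqrt{\pi}}{2}$, whereas if $x_j=0$ then $\Phi_\sigma(0)/\sigma=0$ for every $\sigma$. Summing, the surviving terms are exactly those indexed by the support of $\vx$, giving $J_\sigma(\vx)/\sigma\to\frac{\sqrt{\pi}}{2}\,\|\vx\|_0$.

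Neither part presents a genuine obstacle; the only point requiring care is the case distinction in (b) between zero and nonzero coordinates, since the per-component limit is discontinuous in $t$ at $t=0$ (it equals $\frac{\sqrt{\pi}}{2}$ for $t>0$ but $0$ at $t=0$). This discontinuity is precisely the mechanism that makes $J_\sigma/\sigma$ recover the counting ($L_0$) behaviour, and handling it correctly is what guarantees the factor $\frac{\sqrt{\pi}}{2}\,\|\vx\|_0$ rather than a continuous surrogate.
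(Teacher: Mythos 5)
Your proof is correct and takes essentially the same route as the paper's: separability reduces both limits to scalar ones, and your zero/nonzero case split in (b) via $\Phi_\sigma(x)/\sigma=\frac{\sqrt{\pi}}{2}\,\mathrm{erf}(x/\sigma)$ is exactly the paper's argument. The only variation is in (a), where the paper substitutes $t=x/\sigma$ and applies l'Hospital's rule to get $\lim_{t\to 0}\frac{1}{t}\int_0^t e^{-u^2}\,du=1$, whereas your bounded-convergence argument (or the explicit estimate $0\le t-\Phi_\sigma(t)\le t^3/(3\sigma^2)$) reaches the same scalar limit and additionally supplies a convergence rate.
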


\begin{proof}
Since $J_\sigma(\cdot)$ is separable with respect to each component of $\h x$, it suffices to discuss the limits for a scalar. For a real number $x\neq 0$, we let $t=x/\sigma$ which approaches to zero as $\sigma \rightarrow +\infty$ and hence
we have
\begin{equation}\label{prop_sigma_inf}
\lim_{\sigma\rightarrow+\infty}  {\Phi_\sigma(x)\over x} = \lim_{t\rightarrow 0}   {\int_0^te^{-x^2}dx\over t} = 1.
\end{equation}
The last equality is based on the l'Hospital's rule.
When $x=0$, it is obvious that $\Phi(x/\sigma)=0=x$ and thereby $J_\sigma(\vx)\rightarrow \|\vx\|_1$ as $\sigma\rightarrow +\infty$.

On the other hand, we have $\Phi_\sigma(0)=0$ and
\begin{equation}
\lim_{\sigma\rightarrow 0^+}{\Phi_\sigma(x)\over\sigma} = {\sqrt{\pi}\over 2}, \quad \forall\, x\neq 0.
\end{equation}
Therefore, $J_\sigma(\h x)/\sigma\rightarrow \frac {\sqrt{\pi}} 2\|\h x\|_0$ as $\sigma\rightarrow 0^+.$ \qed
\end{proof}

\begin{figure}
	\centering
	\includegraphics[width=0.7\textwidth]{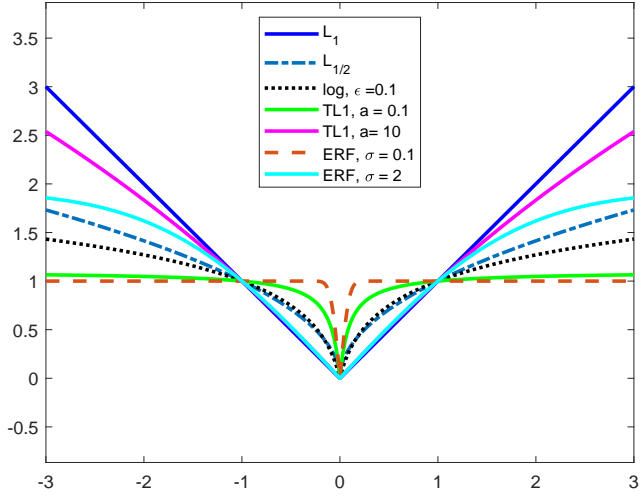}
	\caption{The objective functions of  various sparsity promoting models, all of which are scaled to attain the point $(1,1)$. The proposed ERF gives the best approximation to the $L_0$ norm for a  small value of $\sigma$ and is also relatively ``unbiased,'' compared to other models.} \label{fig:obj_fun}
\end{figure}

Figure~\ref{fig:obj_fun} shows the objective functions of various sparsity promoting models. We scale them to attain the point $(1,1)$ in order to have a better visual comparison. It indicates that the proposed ERF gives the best approximation to the $L_0$ norm for a  small value of $\sigma$ and is also relatively ``unbiased,'' compared to other models.

\subsection{Proximal operator}

Given a function $f: \mathbb R^n\rightarrow \mathbb R\cup \{+\infty\}$, the proximal operator \cite{parikh2014proximal} $\mathbf{prox}^f_{\mu}:\mathbb R^n\rightarrow \mathbb R^n$ of $f$ with a parameter $\mu>0$ is defined by
\begin{equation}\label{eq:prox-def}
\mathbf{prox}^f_\mu (\h v) = \arg\min_{\h x} \Big(\mu f(\h x)+\frac 1 2 \|\h x-\h v\|_2^2\Big).
\end{equation}
If $f$ is the $L_1$ norm, then the corresponding proximal operator is the \textit{soft shrinkage} operator, defined by
\begin{equation}\label{eq:soft_shrinkage}
\mbox{shrink}_\mu(\h v) = \left\{
\begin{array}{lll}
\h v-\mu ,&\ & \h v>\mu,\\
\mathbf{0}, &\ & |\h v|\leq \mu,\\
\h v+\mu ,&\ & \h v<-\mu.
\end{array}
\right.
\end{equation}
Due to its component-wise calculation, this operator is a key to make many $L_1$ minimization algorithms efficient. The proximal operator for the $L_0$ norm with parameter $\mu$ is given by the \textit{hard thresholding}
\begin{equation}\label{eq:hard_threshold}
\mbox{thresh}_\mu(\h v) = \left\{
\begin{array}{lll}
\h v, &\ & |\h v|>\mu,\\
0, &\ & |\h v|\leq \mu.
\end{array}
\right.
\end{equation}
As for TL1, its proximal operator \cite{zhangX17} can be expressed as
\begin{equation}\label{eq:TL1prox}
\mathbf{prox}^{\mathrm{TL1}}_{\mu}(\h v) = \left\{
\begin{array}{lll}
\sign(\h v)\Big[\frac 2 3(a+|\h v|)\cos(\frac{\varphi(\h v)}3)-\frac{2a}3 + \frac{|\h v|}3\Big], &\ & \h v>\mu,\\
0, &\ & |\h v|\leq \mu,\\
\end{array}
\right.
\end{equation}
where $\varphi(\h v) =\arccos\big(1-\frac {27\mu a(a+1)}{2(a+|\h v|^3)}\big)$.

Next we derive the proximal operator for the proposed ERF model. The optimality condition of \eqref{eq:prox-def} reads as
\begin{equation*}\label{eq:prox-optimality_ERF}
\h v\in \mu \partial f(\h x) + \h x = \mu \exp\big(-\frac{\h x^2}{\sigma^2}\big)\partial |\h x| + \h x.
\end{equation*}
%which can be rewritten as
%\begin{equation}\label{eq:prox-optimality_ERF}
%\h v\in \mu \exp\big(-\frac{\h x^2}{\sigma^2}\big)\partial |\h x| + \h x,
%\end{equation}
%for the ERF regularization. 
When $|v_i|\leq \mu$, we have $x_i=0$. Otherwise the optimality condition becomes 
\begin{equation*}
v_i= \mu \exp\big(-\frac{x_i^2}{\sigma^2}\big)\sign(v_i) + x_i.
\end{equation*}
We can  find the solution via the Newton's iteration.
%Since there is no closed-form solution for \eqref{eq:prox-optimality_ERF}, we adopt the Newton's iteration to find the optimal solution. {\color{red} [YL to MY: can you add details about Newton's method?]} \jq{Maybe we just list the updating equation here? Also, I change the equal sign to $\in$ since the subdifferential is a set.}

We plot the proximal operators for $L_0,L_1$, TL1 with $a=0.1, 10$, and ERF with $\sigma=0.1,2$ in Figure~\ref{fig:proximal}. Both TL1 and ERF provide the asymptotic approximations to $L_0$ and $L_1$ when varying their intrinsic parameter. The estimation bias issue can be illustrated by whether the proximal operator approaches to the diagonal line $y=x$ when the magnitude of $x$ increases. In this sense, the plots indicate that ERF causes less bias than $L_1$ and TL1.

\begin{figure}
	\centering
\setlength{\tabcolsep}{1pt}
	\begin{tabular}{ccc}
		$L_0$ & TL1, $a=0.1$ & ERF, $\sigma = 0.1$\\
	\includegraphics[width=0.33\textwidth]{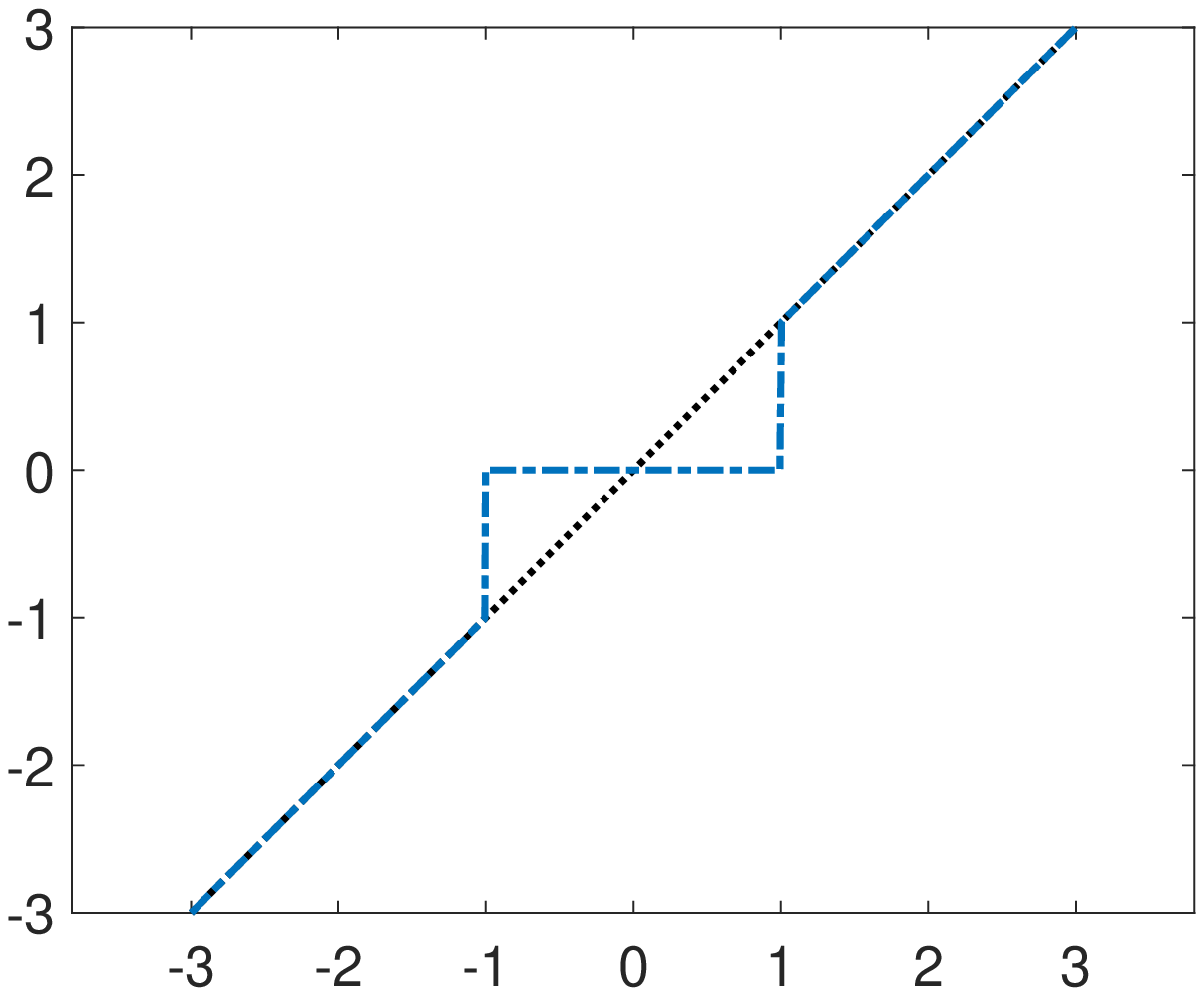}&
	\includegraphics[width=0.33\textwidth]{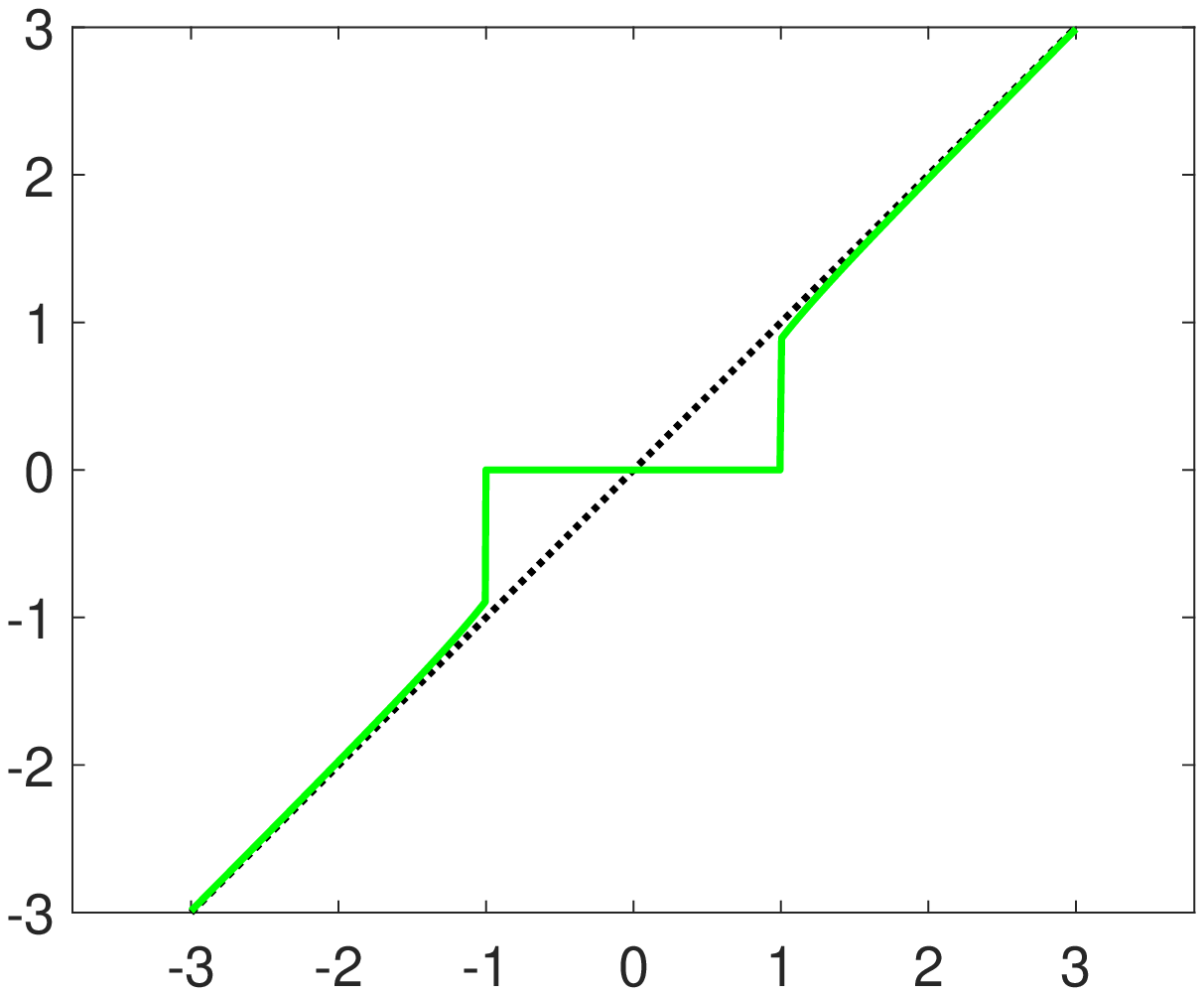}&
	\includegraphics[width=0.33\textwidth]{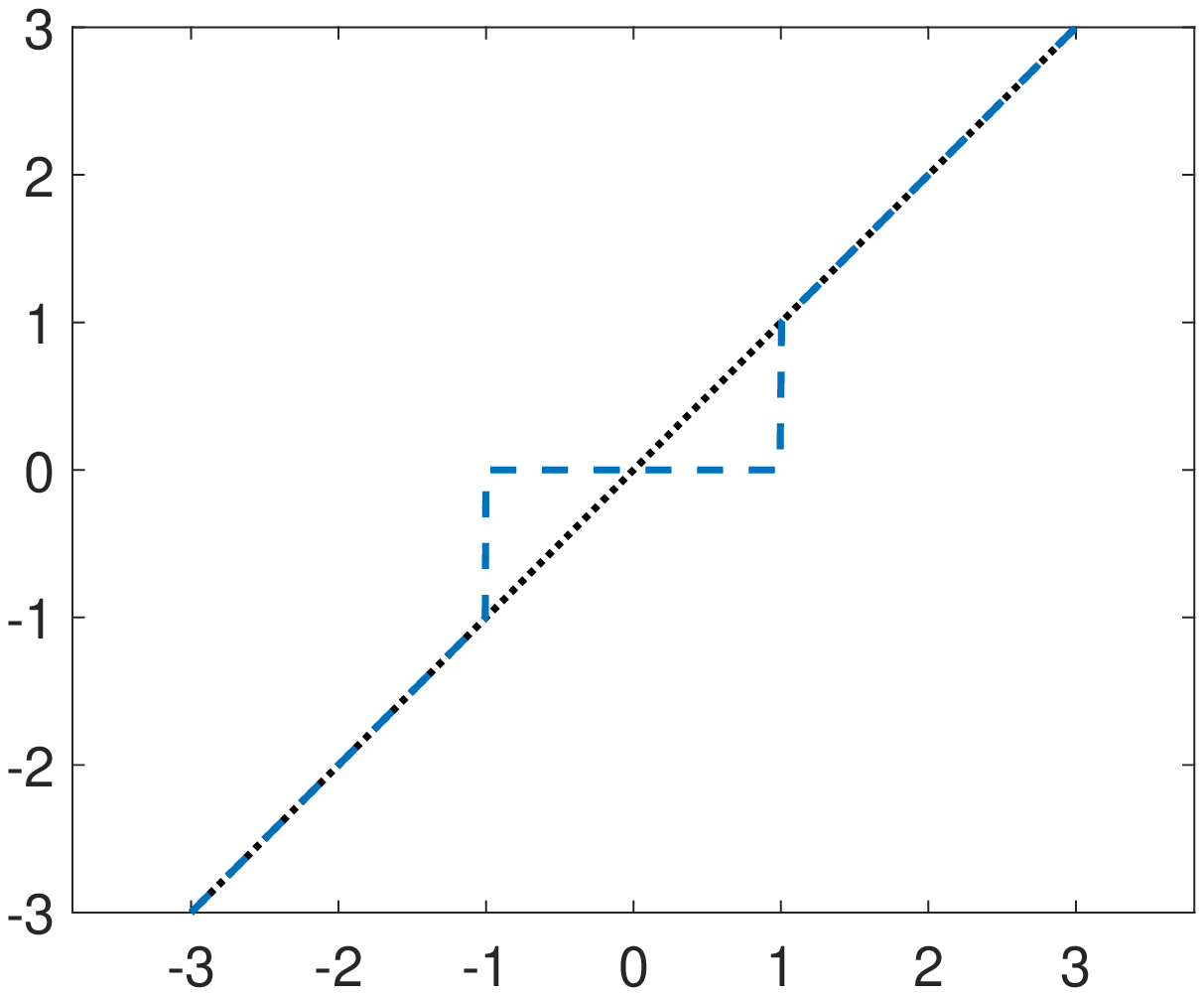}\\
			$L_1$ & TL1, $a=10$ & ERF, $\sigma = 2$\\
	\includegraphics[width=0.33\textwidth]{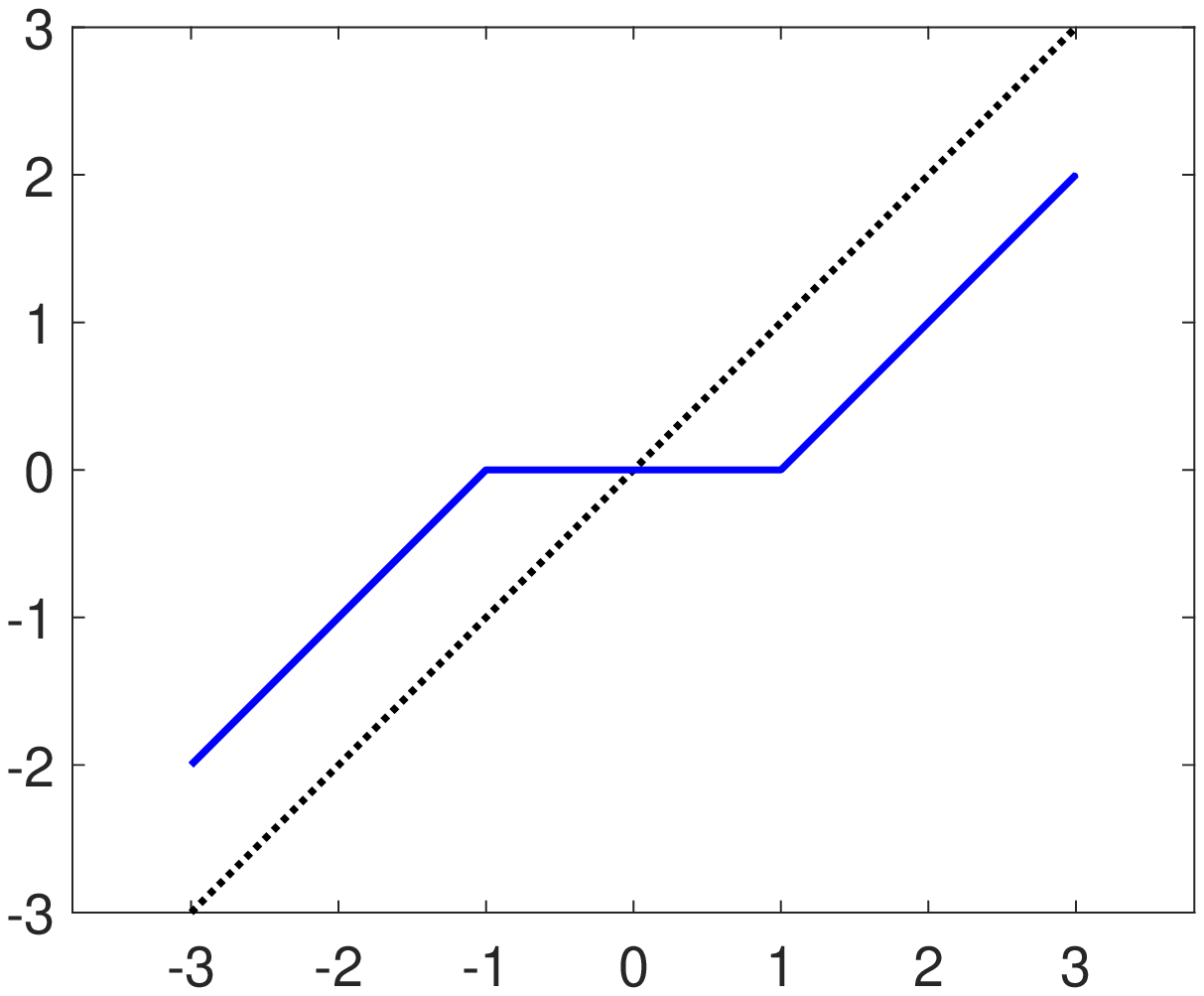}&
	\includegraphics[width=0.33\textwidth]{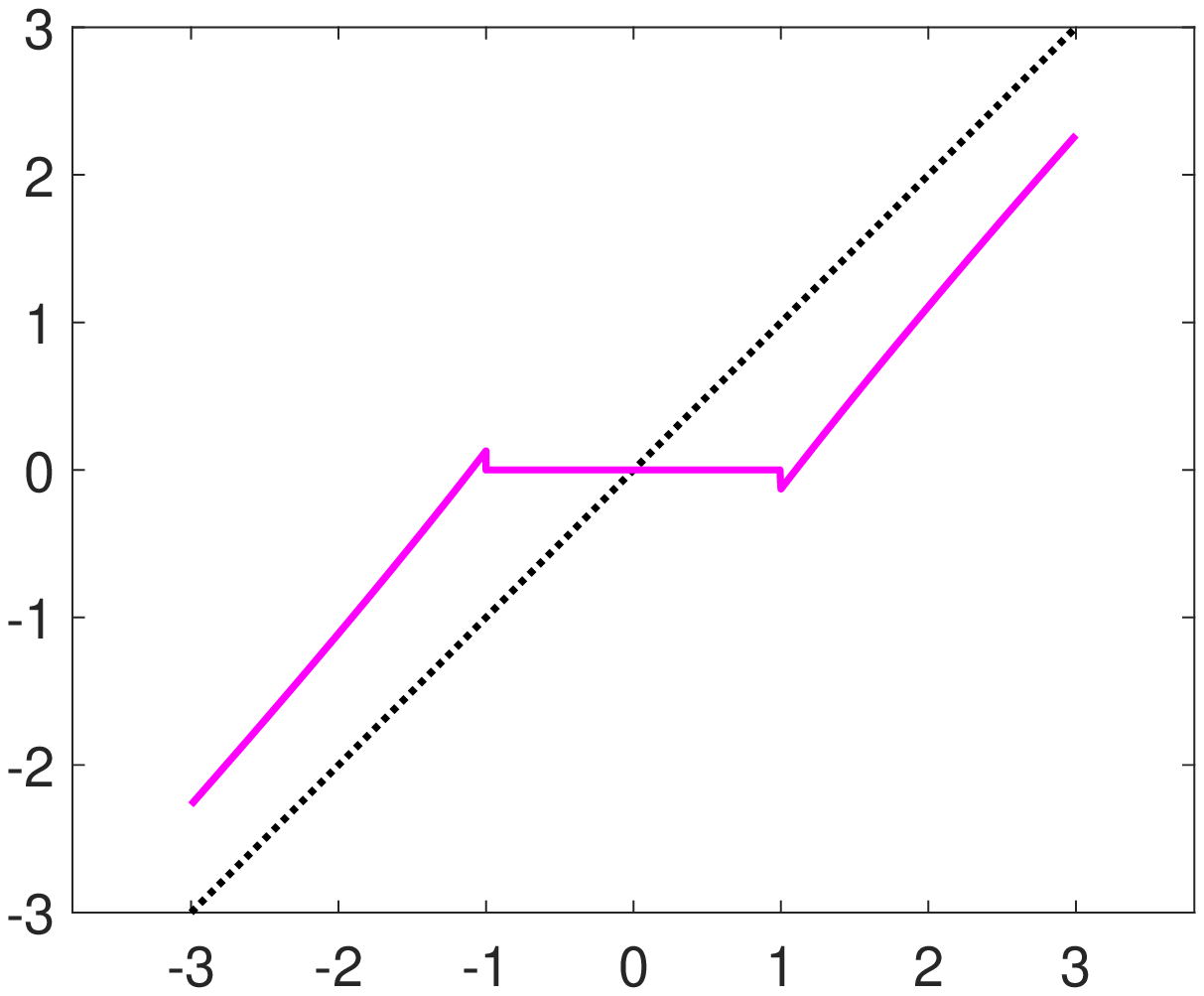}&
	\includegraphics[width=0.33\textwidth]{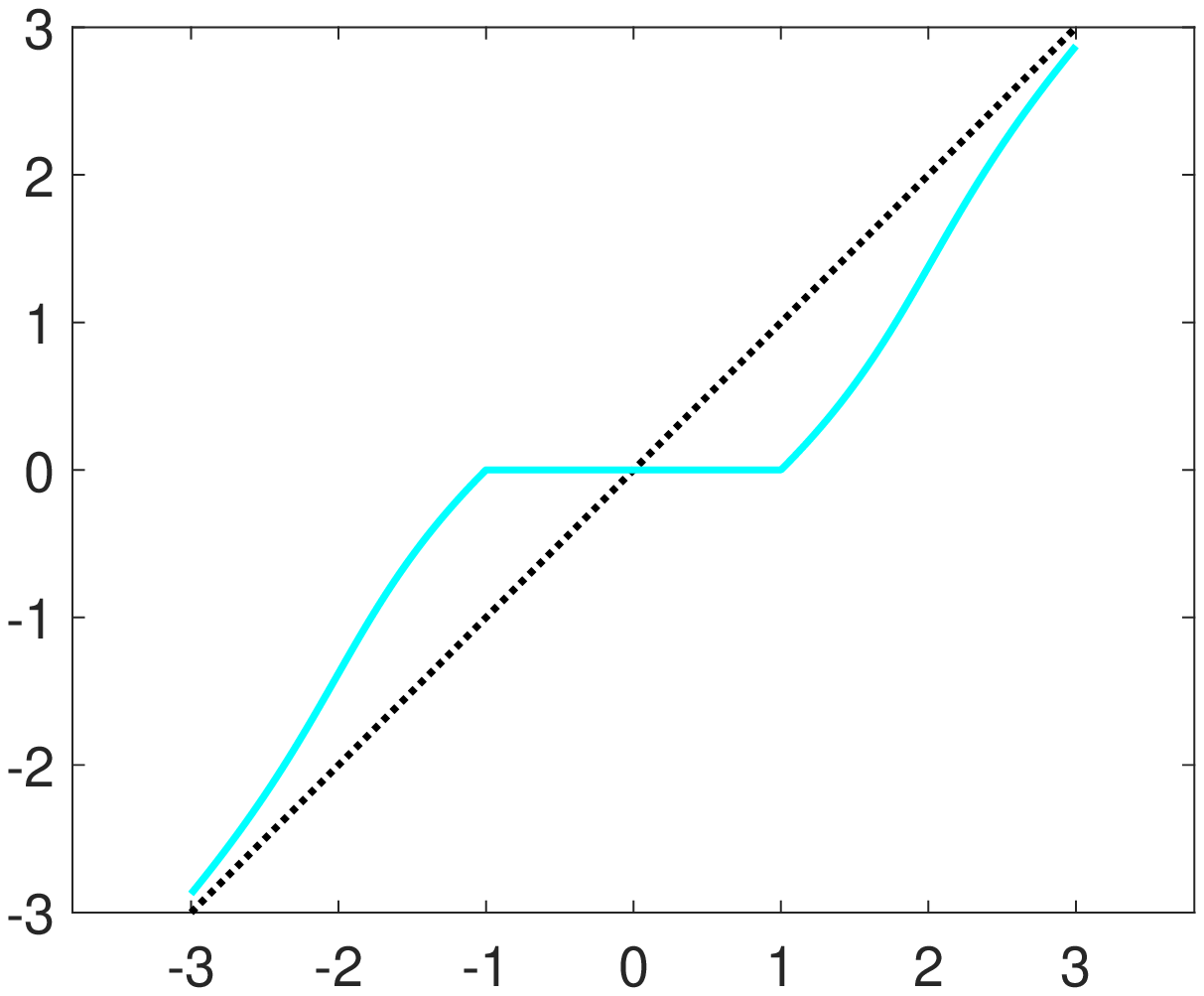}\\
	\end{tabular}
	\caption{Proximal operators of  various sparsity promoting models with $\mu = 1$. } \label{fig:proximal}
\end{figure}

\subsection{Exact recovery guarantee}
Based on the subadditive property, we analyze a generalized null space property (gNSP) \cite{tran2017unified} that guarantees the proposed ERF model exactly finds the desired sparse solution.

\begin{definition}\label{def:NSP}
A matrix $\vA\in\mathbb{R}^{m\times n}$ is said to satisfy a generalized null space property (gNSP) relative to $J_\sigma$ and $S\subseteq\{1,\cdots, n\}$ if
\begin{equation}\label{eq:NSP}
J_\sigma(\vv_S)<J_\sigma(\vv_{S^c})
\end{equation}
for all $\vv\in\ker(\vA)\backslash\{\mathbf{0}\}$. It is said to satisfy the null space property of order $s\leq n$ relative to $J_\sigma$ if it satisfies the null space property relative to $J_\sigma$ and any set $S\subseteq\{1,\cdots, n\}$ with $|S|\leq s$.
\end{definition}

If we replace $J_\sigma$ in \eqref{eq:NSP} by the $L_1$ norm, then Definition~\ref{def:NSP} becomes the standard NSP for exact $L_1$ recovery \cite{donoho2001uncertainty}.

\begin{theorem}
Given a matrix $\vA\in\mathbb{R}^{m\times n}$ and $\sigma>0$, every vector $\vx\in\mathbb{R}^n$ supported on a set $S\subseteq\{1,\cdots, n\}$ is the unique solution of the problem
\[
\min_{\vz}J_\sigma (\vz)\quad\st\quad \vA\vz=\vb
\]
with $\vb=\vA\vx$ if and only if $A$ satisfies  gNSP relative to $J_\sigma$ and $S$.
\end{theorem}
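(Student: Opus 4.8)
The plan is to prove both directions of the equivalence by adapting the classical null space property argument, using only the two algebraic facts about $J_\sigma$ recorded above: subadditivity, $J_\sigma(\vx+\vy)\le J_\sigma(\vx)+J_\sigma(\vy)$, and exact additivity on disjoint supports, $J_\sigma(\vx+\vy)=J_\sigma(\vx)+J_\sigma(\vy)$ whenever $\supp(\vx)\cap\supp(\vy)=\emptyset$. I would record at the outset that $J_\sigma$ is even, $J_\sigma(-\vy)=J_\sigma(\vy)$, since each $\Phi_\sigma$ is applied to $|x_j|$; this lets me drop signs freely.

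For sufficiency ($\Leftarrow$), I assume gNSP relative to $J_\sigma$ and $S$, fix $\vx$ supported on $S$, set $\vb=\vA\vx$, and let $\vz\ne\vx$ be any feasible point. Writing $\vv:=\vx-\vz$, feasibility forces $\vv\in\ker(\vA)\setminus\{\mathbf 0\}$. Since $\vz_S$ and $\vz_{S^c}$ have disjoint supports and $\vx_{S^c}=\mathbf 0$, I would split
\[
J_\sigma(\vz)=J_\sigma(\vz_S)+J_\sigma(\vz_{S^c})=J_\sigma(\vx_S-\vv_S)+J_\sigma(\vv_{S^c}),
\]
then apply subadditivity in its reversed form $J_\sigma(\vx_S-\vv_S)\ge J_\sigma(\vx_S)-J_\sigma(\vv_S)$ to get $J_\sigma(\vz)\ge J_\sigma(\vx)-J_\sigma(\vv_S)+J_\sigma(\vv_{S^c})$. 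The gNSP inequality $J_\sigma(\vv_S)<J_\sigma(\vv_{S^c})$ makes the last two terms contribute strictly positively, yielding $J_\sigma(\vz)>J_\sigma(\vx)$ and hence uniqueness of $\vx$.

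For necessity ($\Rightarrow$), I would argue directly: given any $\vv\in\ker(\vA)\setminus\{\mathbf 0\}$, take the support-$S$ vector $\vx:=\vv_S$, set $\vb:=\vA\vx$, and propose the competitor $\vz:=-\vv_{S^c}$. Because $\vA\vv=\mathbf 0$ gives $\vA\vv_{S^c}=-\vA\vv_S$, the point $\vz$ is feasible, while disjointness of the supports of $\vv_S$ and $\vv_{S^c}$ forces $\vz\ne\vx$. The assumed uniqueness of $\vx$ then yields $J_\sigma(\vv_S)=J_\sigma(\vx)<J_\sigma(\vz)=J_\sigma(\vv_{S^c})$, which is precisely the gNSP condition.

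The argument is essentially routine; the only point requiring care is the support bookkeeping, namely that the equality $J_\sigma(\vz)=J_\sigma(\vz_S)+J_\sigma(\vz_{S^c})$ must invoke the disjoint-support \emph{additivity} rather than mere subadditivity, and that the reversed triangle inequality in the sufficiency step comes from subadditivity applied to the decomposition $\vx_S=(\vx_S-\vv_S)+\vv_S$. I expect no genuine obstacle beyond citing these two properties correctly and using evenness to absorb the minus sign in $J_\sigma(-\vv_{S^c})$.
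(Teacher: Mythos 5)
Your proof is correct and takes essentially the same route as the paper's: the necessity direction uses the identical competitor $\vz=-\vv_{S^c}$ against $\vx=\vv_S$, and your sufficiency argument applies subadditivity to the same decomposition $\vx=\vz_S+\vv_S$, merely rearranged (you lower-bound $J_\sigma(\vz)$ via a reversed triangle inequality where the paper upper-bounds $J_\sigma(\vx)$ in a single chain). The evenness and disjoint-support additivity you cite explicitly are exactly the properties the paper uses implicitly, so there is no substantive difference.
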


\begin{proof}
Given a fixed index set $S$, let us first assume that every vector $\vx\in\mathbb{R}^n$ supported on $S$ is the unique minimizer of $\min_{\vz}J_\sigma(\vz)$ subject to $\vA\vz=\vA\vx$. Thus for any $\vv\in\ker(\vA)\backslash \{\mathbf{0}\}$, the vector $\vv_S$ is the unique minimizer of $J_\sigma(\vz)$ subject to $\vA\vz=\vA\vv_S$. Since we have
\[
\vA(-\vv_{S^c})=\vA\vv_{S}
\]
and $-\vv_{S^c}\neq\vv_S$, we can get the inequality
\[
J_{\sigma}(\vv_S)<J_\sigma(\vv_{S^c}),
\]
which establishes the null space property relative to $J_\sigma$ and $S$.

Conversely, assume that the null space property relative to $J_\sigma$ and $S$ holds. For $\vx\in\mathbb{R}^n$ supported on $S$ and a vector $\vz\in\mathbb{R}^n$ with $\vz\neq \vx$ and $\vA\vz=\vA\vx$, we have $\supp(\vx-\vz_S)=S$. Then the vector $\vv=\vx-\vz\in\ker(\vA)\backslash \{\mathbf{0}\}$ satisfies that $\vv_S=\vx-\vz_S$. Furthermore, due to the subadditive property, we obtain
\[\begin{aligned}
J_\sigma(\vx)&\leq J_\sigma(\vx-\vz_S)+J_\sigma(\vz_S)\\
&=J_\sigma(\vv_S)+J_\sigma(\vz_S)\\
&<J_\sigma(\vv_{S^c})+J_\sigma(\vz_S)\\
&=J_\sigma(-\vz_{S^c})+J_\sigma(\vz_S)\\
&=J_\sigma(\vz_{S^c})+J_\sigma(\vz_S)=J_\sigma(\vz),
\end{aligned}
\]
which implies that $\vx$ is the unique minimizer of $J_\sigma(\vz)$ subject to the constraint $\vA\vz=\vA\vx$.\qed
\end{proof}

By varying the support, we can get the following theorem for necessary and sufficient conditions of exact sparse recovery. 
\begin{theorem}\label{thm:gNSP}
Given a matrix $\vA\in\mathbb{R}^{m\times n}$ and $\sigma>0$, every $s$-sparse vector $\vx\in\mathbb{R}^n$ is the unique solution of the problem
\[
\min_{\vz}J_\sigma (\vz)\quad\st\quad \vA\vz=\vb
\]
with $\vb=\vA\vx$ if and only if $\vA$ satisfies the null space property of order $s$ relative to $J_\sigma$.
\end{theorem}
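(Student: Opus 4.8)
The plan is to deduce this statement directly from the fixed-support theorem established just above, by quantifying over all admissible supports. Indeed, the null space property of order $s$ relative to $J_\sigma$ is, by Definition~\ref{def:NSP}, precisely the conjunction of the gNSP relative to $J_\sigma$ and $S$ taken over every index set $S$ with $|S|\leq s$. The only conceptual point to pin down is the dictionary between ``$s$-sparse vectors'' and ``vectors supported on a set $S$ of cardinality at most $s$'': a vector $\vx$ supported on such an $S$ automatically satisfies $\|\vx\|_0\leq|S|\leq s$ and is therefore $s$-sparse, while conversely any $s$-sparse $\vx$ is supported on its own support $S=\supp(\vx)$, which obeys $|S|\leq s$.

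For the sufficiency direction, I would assume that $\vA$ enjoys the null space property of order $s$ relative to $J_\sigma$ and fix an arbitrary $s$-sparse vector $\vx$. Setting $S=\supp(\vx)$ gives $|S|\leq s$, so the order-$s$ property specializes to gNSP relative to $J_\sigma$ and this particular $S$. The fixed-support theorem then guarantees that every vector supported on $S$---in particular $\vx$ itself---is the unique solution of the $J_\sigma$-minimization with $\vb=\vA\vx$, which is exactly the claim.

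For the necessity direction, I would assume that every $s$-sparse vector is the unique minimizer and fix an arbitrary index set $S$ with $|S|\leq s$. Any vector $\vx$ supported on $S$ is $s$-sparse by the cardinality bound, hence a unique solution by hypothesis; thus the hypothesis of the fixed-support theorem holds for this $S$, and that theorem yields gNSP relative to $J_\sigma$ and $S$. Since $S$ was arbitrary among sets of size at most $s$, $\vA$ satisfies the null space property of order $s$ by Definition~\ref{def:NSP}. I do not anticipate a genuine obstacle here: the entire analytic content is already carried by the fixed-support theorem and the subadditivity/disjoint-support identity on which it rests, so the only care required is the bookkeeping of quantifiers over supports---in particular, remembering that ``supported on $S$'' permits $\supp(\vx)\subsetneq S$, which is what makes the two directions line up cleanly.
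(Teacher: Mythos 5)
Your proposal is correct and matches the paper exactly: the paper derives Theorem~\ref{thm:gNSP} from the fixed-support theorem with the one-line remark ``by varying the support,'' which is precisely the quantifier bookkeeping you spell out. Your careful handling of the dictionary between $s$-sparse vectors and vectors supported on a set $S$ with $|S|\leq s$ (including the case $\supp(\vx)\subsetneq S$) is a welcome elaboration of the step the paper leaves implicit.
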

Due to the involvement of every $s$-sparse vector in Theorem~\ref{thm:gNSP}, it is NP-hard to verify whether a matrix satisfies gNSP or not.
On the other hand, if we relax ``every $s$-sparse vector,'' then gNSP is no longer necessary.

\section{Algorithms}\label{sect:alg}

We apply the reweighted $L_1$ approach \cite{candes2008enhancing} to minimize the proposed regularization $J_\sigma$. We shall discuss two optimization formulations: constrained and unconstrained, separately.

\subsection{Constrained formulation}\label{sect:alg_con}
Consider an ERF-regularized minimization problem with a linear constraint
\begin{equation}\label{eqn:con}
\min_{\h x\in\R^n} J_\sigma(\h x) \quad \mbox{s.t.} \quad \vA\h x = \h b.
\end{equation}
According to the general IRL1 framework \eqref{eqn:con_rwl14general}, the objective function in \eqref{eqn:con} can be expressed as $J_\sigma (\h x)=F_2(G(\h x)),$ where $F_2 =\Phi^{\mathrm{ERF}}_\sigma$ and $G(\h x) = |\h x|.$ By calculating the derivative of the error function \eqref{prop_erf},
%It is straightforward that
%\[
%\dfrac{\partial J_\sigma(\h x)}{\partial \h x} =  \exp\{-\frac{\h x^2}{\sigma^2}\} \partial |\h x|_1,
%\]
%where $\partial |\h x|_1$ denotes the subgradient of $|\h x|_1$.
%%
%%{\color{red} [to MY: how you derive the weight function below?]}
%As a result,
 we obtain the following iterative scheme
\begin{equation}\label{eqn:con_rwl1}
\left\{\begin{array}{l}
w_j^{k} = \exp\{-(\frac{x_j^k}{\sigma})^2\}\\
\h x ^{k+1} = \arg\min_{\h x\in\R^n} \sum_{j=1}^n w^{k}_j|x_j| \quad \mbox{s.t.} \quad \vA\h x = \h b\\
\end{array}
\right.
\end{equation}
The $\h x$-subproblem in \eqref{eqn:con_rwl1} can be cast as a linear programming. We use the commercial Gurobi solver (\url{https://www.gurobi.com/}) to solve this subproblem.  Convergence of the scheme \eqref{eqn:con_rwl1} is presented in Theorem~\ref{thm:con_convg}.

\begin{theorem}\label{thm:con_convg}
	The sequence $\{\vx^k\}_{k=1}^\infty$ generated by the reweighted $L_1$ iteration \eqref{eqn:con_rwl1} is bounded. It has a convergent subsequence and any accumulation point of $\{\vx^k\}_{k=1}^\infty$ is a stationary point of~\eqref{eqn:con}.
\end{theorem}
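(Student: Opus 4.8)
The plan is to read \eqref{eqn:con_rwl1} as a majorization--minimization (MM) scheme and run the classical three-part argument: (i) monotone decrease of the objective, (ii) boundedness of the iterates, and (iii) subsequential convergence to a stationary point. I would first exploit the concavity of $J_\sigma$ listed among the properties above, together with the identity $w_j^k=\exp(-(x_j^k/\sigma)^2)=\Phi_\sigma'(|x_j^k|)$ coming from \eqref{prop_erf}. Concavity of $\Phi_\sigma$ furnishes the linear majorant $\Phi_\sigma(|x_j|)\le \Phi_\sigma(|x_j^k|)+w_j^k(|x_j|-|x_j^k|)$, so the weighted-$L_1$ functional minimized in \eqref{eqn:con_rwl1} is, up to an $\vx$-independent constant, a global surrogate for $J_\sigma$ that touches it at $\vx^k$. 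Since $\vx^k$ is itself feasible, optimality of $\vx^{k+1}$ gives $\sum_j w_j^k|x_j^{k+1}|\le \sum_j w_j^k|x_j^k|$, and the majorant inequality then yields $J_\sigma(\vx^{k+1})\le J_\sigma(\vx^k)$. As $J_\sigma\ge 0$, the sequence $\{J_\sigma(\vx^k)\}$ is nonincreasing and hence convergent, with common limit $L$.

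The main obstacle is boundedness, because $\Phi_\sigma$ is itself bounded (by $c=\tfrac{\sigma\sqrt\pi}{2}$), so $J_\sigma$ is not coercive and the decreasing objective carries no information about $\|\vx^k\|$; even the uniform estimate $\sum_j w_j^k|x_j^{k+1}|\le n\max_{t\ge 0}t\,e^{-t^2/\sigma^2}$ fails to bound the iterates since individual weights may vanish. I would resolve this at the level of the subproblem: the $\vx$-update is a linear program, and after the splitting $\vx=\vu-\vv$ with $\vu,\vv\ge \vzero$ its feasible region is a pointed polyhedron, so one may take $\vx^{k+1}$ to be a basic optimal solution. Such a solution has support $S^{k+1}$ of size at most $m$ with $\vA_{S^{k+1}}$ of full column rank, so that $\vx^{k+1}$ is the unique solution of $\vA_{S^{k+1}}\vx_{S^{k+1}}=\vb$. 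Since there are only finitely many index sets $S$ with $|S|\le m$, the iterates lie in a finite set and are therefore uniformly bounded; in particular a convergent subsequence $\vx^{k_i}\to\vx^*$ exists.

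For stationarity I would pass to the limit in the optimality condition of the subproblem. Boundedness lets me extract a further subsequence along which $\vx^{k_i+1}\to\vx^{**}$, while continuity of $t\mapsto \exp(-t^2/\sigma^2)$ gives $w_j^{k_i}\to w_j^*:=\exp(-(x_j^*/\sigma)^2)$. Passing to the limit in $\sum_j w_j^{k_i}|x_j^{k_i+1}|\le \sum_j w_j^{k_i}|z_j|$ for every feasible $\vz$ shows $\sum_j w_j^*|x_j^{**}|\le \sum_j w_j^*|z_j|$, and choosing $\vz=\vx^*$ gives $\sum_j w_j^*|x_j^{**}|\le \sum_j w_j^*|x_j^*|$. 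For the reverse inequality I would use that both $\{J_\sigma(\vx^{k_i})\}$ and $\{J_\sigma(\vx^{k_i+1})\}$ tend to $L$; taking the limit in the majorant inequality $J_\sigma(\vx^{k_i+1})\le J_\sigma(\vx^{k_i})+\sum_j w_j^{k_i}(|x_j^{k_i+1}|-|x_j^{k_i}|)$ produces $\sum_j w_j^*(|x_j^{**}|-|x_j^*|)\ge 0$. The two inequalities force $\sum_j w_j^*|x_j^*|=\sum_j w_j^*|x_j^{**}|$, so the feasible point $\vx^*$ is itself a minimizer of $\sum_j w_j^*|x_j|$ subject to $\vA\vx=\vb$. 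Writing the KKT conditions of that weighted problem and using the identity $w_j^*\,\partial|x_j^*|=\partial_{x_j}\Phi_\sigma(|x_j^*|)$ (valid also at $x_j^*=0$, where both sides equal $[-1,1]$), I conclude that there exists $\blambda\in\R^m$ with $\vzero\in\partial J_\sigma(\vx^*)+\vA^{\top}\blambda$ and $\vA\vx^*=\vb$, i.e.\ $\vx^*$ is a stationary point of \eqref{eqn:con}. The delicate steps are the boundedness argument, which genuinely needs the LP/basic-solution structure rather than coercivity, and the value-squeeze that upgrades $\vx^{**}$-optimality to $\vx^*$-optimality.
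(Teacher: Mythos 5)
Your majorization step and monotone decrease of $J_\sigma$ coincide exactly with the paper's argument, so the disagreement is localized in the other two parts. In the boundedness step you have a genuine, though patchable, gap relative to the theorem as stated: you prove boundedness only for a particular \emph{selection} of the argmin, namely when $\vx^{k+1}$ is taken to be a basic (vertex) optimal solution of the LP, whereas the theorem concerns any sequence generated by \eqref{eqn:con_rwl1}, and the subproblem can have non-vertex minimizers (whole optimal faces). The repair is exactly what the paper does: since every weight $w_j^k=\exp(-(x_j^k/\sigma)^2)$ is strictly positive, the optimal face of the split LP in $(\vu,\vv)\geq\vzero$ has no nonzero recession direction (such a direction would have zero weighted cost), hence is the convex hull of optimal vertices; the paper carries this out by hand, perturbing an optimal $\bar\vx$ along a kernel direction of $\vA_S$ until a component vanishes and iterating, and concludes that \emph{every} iterate lies in the convex hull of the finite set $\{\vx:\ \vA_S\vx_S=\vb,\ \vx_{S^c}=\vzero,\ \vA_S \text{ of full column rank}\}$. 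With that one extra sentence your finite-set argument covers arbitrary selections; as written, it proves a weaker statement.

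Your stationarity passage, on the other hand, is correct and genuinely different from the paper's, and in one respect more careful. The paper asserts $\vx^k-\vx^{k+1}\rightarrow\vzero$ from the vanishing concavity gaps and then passes to the limit in the subproblem optimality condition $\vw^{n_k}\odot\vp^{n_k+1}\in\range(\vA^\top)$, using closedness of $\range(\vA^\top)$ and of the graph of $\partial|\cdot|$. But the vanishing gaps $\Phi_\sigma(|x_j^k|)-\Phi_\sigma(|x_j^{k+1}|)-w_j^k(|x_j^k|-|x_j^{k+1}|)\rightarrow 0$ control at best $|x_j^{k+1}|-|x_j^k|$, not the signed difference, so the paper's step needs additional justification. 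Your value-squeeze sidesteps this entirely: you extract $\vx^{k_i}\rightarrow\vx^*$ and $\vx^{k_i+1}\rightarrow\vx^{**}$ separately, pass to the limit in the optimality of $\vx^{k_i+1}$ to get $\sum_j w_j^*|x_j^{**}|\leq\sum_j w_j^*|z_j|$ for all feasible $\vz$, and then use $J_\sigma(\vx^{k_i})\rightarrow L$ and $J_\sigma(\vx^{k_i+1})\rightarrow L$ in the majorant inequality to force $\sum_j w_j^*|x_j^*|=\sum_j w_j^*|x_j^{**}|$, so that $\vx^*$ itself solves the limiting weighted LP; the LP KKT conditions plus the identity $w_j^*\,\partial|x_j^*|=\partial\bigl(\Phi_\sigma\circ|\cdot|\bigr)(x_j^*)$ (with $w_j^*=1$ when $x_j^*=0$) then give precisely the stationarity condition the paper derives. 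This route never needs successive-difference convergence and is immune to sign flips of components along the subsequence, at the cost of one additional subsequence extraction.
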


\begin{proof}
We start by showing that the sequence $\{\vx^k\}_{k=1}^\infty$ is bounded. In particular, we aim to show that the sequence of $\{\vx^{k}\}_{k=1}^\infty$ is in the convex hull constructed by the set $\{\vx: \exists S \mbox{ such that } \vA_S \mbox{ has full column rank, } \vA_S\vx_S=\vb, \vx_{S^c}=\vzero\}$, where $S^c$ is the complement of $S$.
Since the number of linear independent submatrices $\vA_S$ is finite, the convex hull is bounded and hence the sequence is bounded.

\iffalse
In fact, it is within a convex hull constructed by all the possible solutions of $\vA_s\vx_s=\vb$ with $\vA_s$ being $s$ columns of $\vA$. \jq{How to pick up those $s$ columns of $\vA$ to obtain $\vA_s$? Are they randomly selected? Or do we consider all such sub-matrices?}	
	Let $\bar{\h x}$ be the optimal solution for the $\h x$-subproblem in \eqref{eqn:con_rwl1}. If $\|\bar{\h x}\|_0\leq m$, then we are done. {\color{red} [why?]} \jq{Did you assume $\norm{\vx^k}_0\leq m$ for all $k$'s?}
	Otherwise, let $S$ be the corresponding index of nonzero components in $\bar{\h x}$. We consider the following linear programming problem
	\begin{equation}%\label{proof:LP}
	\min_{\h x} \sum_{j\in S}w_j\sign(\bar{x}_j) x_j,\quad\st\quad \vA_S\h x_S=\h b.
	\end{equation}
	It follows from the optimality condition that $\bar{\h x}$ is an optimal solution to this linear programming problem.
	Furthermore, we can find two other solutions to \eqref{proof:LP}, denoted by $\breve{\h x}$ and $\hat{\h x}$,  both having $m$ nonzero components such that $\bar{\h x}=\lambda \breve{\h x}+(1-\lambda)\hat{\h x}$ with $\lambda\in(0,1)$. We can easily show that $\sign(\bar x_j)\breve x_j=|\breve x_j|$ and $\sign(\bar x_j)\hat x_j=|\hat x_j|$, otherwise $\breve{\h x}$ or $\hat{\h x}$ is a better solution for the reweighted $L_1$ subproblem.
\fi
Let $\bar{\h x}$ be an optimal solution for the $\h x$-subproblem in \eqref{eqn:con_rwl1}, and $S$ be the set of corresponding indices of nonzero components in $\bar{\h x}$. Given an arbitrary set of weights, denoted by $w_j$, we consider the following problem restricted to the index set $S$,
\begin{equation}\label{proof:LP}
\min_{\h x\in\R^n} \sum_{j\in S}w_j |x_j|,\quad\st\quad \vA_S\h x_S=\h b,~\vx_{S^c}=\mathbf{0}.
\end{equation}
The optimality condition of \eqref{proof:LP}  is
\begin{align} \label{eq:opt_c}
\begin{bmatrix}
\vM & \mathbf{0} \\
 \mathbf{0} & \vA_S^\top
\end{bmatrix}
\begin{bmatrix}
\bar\vx\\\vy
\end{bmatrix} :=\begin{bmatrix}
	\vA_S  & \mathbf{0} & \mathbf{0} \\
	\mathbf{0} & \vI& \mathbf{0}\\
	\mathbf{0} & \mathbf{0} & \vA_S^\top
\end{bmatrix}
\begin{bmatrix}
\bar\vx_S\\\bar\vx_{S^c}\\\vy
\end{bmatrix} = \begin{bmatrix}
\vb\\ \mathbf{0} \\ \hat\vw_S
\end{bmatrix},\end{align}
where $\vy$ is the dual variable and $\hat\vw_S=(\vw\odot\sign(\bar \vx))_S$.
If $\ker(\vM)\neq \{\h 0\}$, then we pick a nonzero vector $\bigtriangleup \vx$ from its kernel (note $(\bigtriangleup \vx)_{S^c}=\vzero$).
Thus, from the optimality condition~\eqref{eq:opt_c}, we can find $t_1<0$ and $t_2>0$ such that $\bar\vx+t\bigtriangleup\vx$ is also an optimal solution for any $t\in[t_1,t_2]$. Both $t_1$ and $t_2$ exist, otherwise the optimal objective value will be $+\infty$.
Furthermore, we can choose $t_1$ (and $t_2$) such that at least one component of $\hat\vx_S:= \bar\vx_S+t_1(\bigtriangleup\vx)_S$ (and $\breve\vx_S :=\bar\vx_S+t_2(\bigtriangleup\vx)_S$)  is zero.
It implies that $\bar\vx$ is a weighted average of $\breve\vx$ and $\hat\vx$, both of which have fewer nonzero components. Then we can apply the same technique on $\hat\vx$ and $\breve\vx$ until we end up solutions such that $\ker(\vM)=\{\mathbf{0}\}$. Therefore, we have $\bar\vx$ is within a convex hull constructed by some solutions of $\vA_S\vx_S=\vb$ and $\vx_{S^c}=\vzero$ with linearly independent $\vA_S$. Since the number of submatrices is finite, we know the whole sequence is bounded.

%\jq{I guess that we need to mention that all finite possible $A_S$'s as well.}	
	
Now that $\{\vx^k\}_{k=1}^\infty$ is bounded, then the Bolzano--Weierstrass Theorem guarantees the existence of a convergent subsequence, denoted by $\{\vx^{n_k}\}_{k=1}^\infty$. Assume that it converges to $\vx^*$. Since $\h x^{n_k}\rightarrow \h x^*$ and $\vA\h x^{n_k} = \h b$, we have $\vA\h x^*=\h b.$	
	 %Next we will show that $\vx^*$ is a stationary point of~\eqref{eqn:con} given the uniform upper bound of $|x_i^k|\leq C$ for all $i$ and $k$.
	Due to the $\h x$-subproblem definition in \eqref{eqn:con_rwl1}, it is straightforward to have 
\[
\sum_{j=1}^{n} w_j^k|x_j^{k+1}|\leq \sum_{j=1}^{n} w_j^k|x_j^{k}|.
\]	
%	We have, for $i=1,\dots,N$,
%	\begin{align}
%	0 =\lambda w_i^{k}p_i^{k+1} + \vA_i^\top(\vA\vx^{k+1}-\vb),
%	\end{align}
%	where $p_i^{k+1}\in\partial |x_i^{k+1}|$.
As a result,  we have the following inequality
	\begin{align*}
	 J_\sigma(\vx^k)- J_\sigma(\vx^{k+1}) \geq& \sum_{j=1}^n \big[\Phi_\sigma(x_j^k)-\Phi_\sigma(x_j^{k+1})\big]-\sum_{j=1}^nw_j^{k}\big(|x_j^k|-|x_j^{k+1}|\big)\nonumber\\
	=& \sum_{j=1}^n\Big[\Phi_\sigma(x_j^k)-\Phi_\sigma(x_j^{k+1})-w_j^{k}\big(|x_j^k|-|x_j^{k+1}|\big)\Big]\geq 0,
	\end{align*}
since $\Phi_\sigma(\cdot)$ is a concave function and $w_j^k$ is the derivative of $\Phi_\sigma$ evaluated at $|x_i^k|$.
%Therefore, we get that $J_\sigma(x_i^k)-J_\sigma(x_{i}^{k+1})-w_i^{k}(|x_i^k|-|x_i^{k+1}|)$ converges to $0$ for all $i$.
We have $\vx^k-\vx^{k+1}\rightarrow\vzero$ and $\vx^{n_k+1}$ converges to $\vx^*$.
According to the optimality condition of~\eqref{eqn:con_rwl1}, we have $p_j^{n_k+1}\in\partial |x_j^{n_k+1}|$ and
	$\h w^{n_k+1}\odot \h p^{n_k+1}$ is in the range of $\vA^\top$.
Since the sequence $\{p_j^{n_k+1}\}$ is bounded by $\pm 1$, it has a convergent subsequence. Without loss of generality, we assume
 that $\{p_j^{n_k+1}\}$ converges itself. Therefore, we have
	\begin{align*}
	\exp\{-({x_j^*\over \sigma})^2\}p_j^* = \lim_{k\rightarrow \infty}w_j^{n_k}p_j^{n_k+1},
	\end{align*}
which	is in the range of $\vA^\top$ and $p_j^*\in \partial|x_j^*|$.
	This shows that $\vx^*$ is a stationary point of~\eqref{eqn:con}.
\qed
\end{proof}

\subsection{Unconstrained formulation}\label{sect:alg_uncon}
In the noisy case, we consider an unconstrained problem of the form
\begin{equation}\label{eqn:uncon}
\min_{\h x\in\R^n} \lambda J_\sigma(\h x) +\frac 1 2 \|\vA\h x - \h b\|_2^2,
\end{equation}
with a positive parameter $\lambda$. The reweighted $L_1$ algorithm requires to solve the following subproblem:
\begin{equation}\label{eqn:uncon_rwl1_xsub}
\h x ^{k+1} = \arg\min_{\h x\in \R^n} \lambda\sum_{j=1}^n w^{k}_j|x_j|+\frac 1 2 \|\vA\h x - \h b\|_2^2,
\end{equation}
where the weight vector $\h w^{k}$ is defined the same as in \eqref{eqn:con_rwl1}. We establish the convergence of the iterative scheme \eqref{eqn:uncon_rwl1_xsub}, followed by a proposed algorithm for the subproblem.

\begin{theorem}\label{thm:uncon_convg}
The sequence $\{\vx^k\}_{k=1}^\infty$ generated by the reweighted $L_1$ iteration \eqref{eqn:uncon_rwl1_xsub} is bounded and has a convergent subsequence. Any accumulation point of $\{\vx^k\}_{k=1}^\infty$ is a stationary point of~\eqref{eqn:uncon}.
\end{theorem}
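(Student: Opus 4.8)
The plan is to mirror the proof of Theorem~\ref{thm:con_convg} step by step, adapting each piece to the presence of the smooth data-fidelity term. Write $F(\vx):=\lambda J_\sigma(\vx)+\tfrac12\|\vA\vx-\vb\|_2^2$ for the objective in \eqref{eqn:uncon}. First I would show that $F$ is nonincreasing along the iterates. Since $\vx^{k+1}$ minimizes the subproblem \eqref{eqn:uncon_rwl1_xsub} and $\vx^k$ is feasible for it, we have $\lambda\sum_j w_j^k|x_j^{k+1}|+\tfrac12\|\vA\vx^{k+1}-\vb\|_2^2\le \lambda\sum_j w_j^k|x_j^{k}|+\tfrac12\|\vA\vx^{k}-\vb\|_2^2$. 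Combining this with $w_j^k=\Phi_\sigma'(|x_j^k|)$ and the concavity of $\Phi_\sigma$, exactly as in Theorem~\ref{thm:con_convg}, yields
\begin{equation*}
F(\vx^k)-F(\vx^{k+1})\ \ge\ \lambda\sum_{j=1}^n\Big[\Phi_\sigma(|x_j^k|)-\Phi_\sigma(|x_j^{k+1}|)-w_j^k\big(|x_j^k|-|x_j^{k+1}|\big)\Big]\ \ge\ 0 .
\end{equation*}
Because $F\ge 0$, the sequence $\{F(\vx^k)\}$ converges and the nonnegative summands above tend to zero.

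The hard part will be boundedness, since, unlike the constrained case, the feasible set is all of $\R^n$ and $J_\sigma$ is bounded (hence not coercive), so coercivity of $F$ is unavailable when $\vA$ has a nontrivial kernel. I would therefore reuse the convex-hull argument of Theorem~\ref{thm:con_convg}: fixing the weights, set $\bar\vx=\vx^{k+1}$ and $S=\supp(\bar\vx)$; moving along any direction in $\ker(\vA_S)$ supported on $S$ leaves the fidelity term unchanged and, by optimality, leaves the weighted $L_1$ term locally constant, so $\bar\vx$ is a convex combination of two optimal points of strictly smaller support. Iterating, $\bar\vx$ lies in the convex hull of ``basic'' optimal solutions whose support $S$ makes $\vA_S$ of full column rank. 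For such a basic solution the optimality condition reads $\vA_S^\top(\vA_S\vx_S-\vb)+\lambda\,(\vw_S\odot\sign(\vx_S))=\vzero$, whence $\vx_S=(\vA_S^\top\vA_S)^{-1}\big(\vA_S^\top\vb-\lambda\,\vw_S\odot\sign(\vx_S)\big)$. The new ingredient relative to the constrained proof is that these basic solutions now depend on the weights; however, since $w_j^k=\exp(-(x_j^k/\sigma)^2)\in(0,1]$, we have $\|\vw_S\|\le\sqrt{n}$, so each basic solution is bounded by $\max_S\|(\vA_S^\top\vA_S)^{-1}\|(\|\vA_S^\top\vb\|+\lambda\sqrt{n})$, a constant independent of $k$ because only finitely many full-column-rank index sets $S$ exist. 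Hence $\{\vx^k\}$ stays in a fixed bounded set.

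Boundedness then delivers a convergent subsequence $\vx^{n_k}\to\vx^*$ by Bolzano--Weierstrass, and $\vA\vx^{n_k}\to\vA\vx^*$. Next I would upgrade the vanishing of the summands to asymptotic regularity $\vx^k-\vx^{k+1}\to\vzero$. Each summand $D_j^k:=\Phi_\sigma(|x_j^k|)-\Phi_\sigma(|x_j^{k+1}|)-w_j^k(|x_j^k|-|x_j^{k+1}|)$ is the Bregman divergence of the strictly convex function $-\Phi_\sigma$ and vanishes only when $|x_j^k|=|x_j^{k+1}|$; since the iterates remain in a compact set, a continuity/compactness argument turns $D_j^k\to0$ into $\big||x_j^k|-|x_j^{k+1}|\big|\to0$ for each $j$, and a sign bookkeeping (as in Theorem~\ref{thm:con_convg}) upgrades this to $\vx^k-\vx^{k+1}\to\vzero$, so that $\vx^{n_k+1}\to\vx^*$ as well.

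Finally I would pass to the limit in the optimality condition of \eqref{eqn:uncon_rwl1_xsub}: there exist $\vp^{n_k+1}$ with $p_j^{n_k+1}\in\partial|x_j^{n_k+1}|$ and $\lambda\,\vw^{n_k}\odot\vp^{n_k+1}+\vA^\top(\vA\vx^{n_k+1}-\vb)=\vzero$. As $p_j^{n_k+1}\in[-1,1]$ is bounded, after passing to a further subsequence $\vp^{n_k+1}\to\vp^*$, and using $\vx^{n_k}\to\vx^*$, $\vx^{n_k+1}\to\vx^*$ and continuity of $t\mapsto\exp(-t^2/\sigma^2)$ gives $\lambda\,\vw^*\odot\vp^*+\vA^\top(\vA\vx^*-\vb)=\vzero$ with $w_j^*=\exp(-(x_j^*/\sigma)^2)$. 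It then remains to verify $\vw^*\odot\vp^*\in\partial J_\sigma(\vx^*)$: for $x_j^*\ne0$ we have $p_j^{n_k+1}=\sign(x_j^*)$ for large $k$, so $w_j^*p_j^*=\exp(-(x_j^*/\sigma)^2)\sign(x_j^*)=\tfrac{d}{dx}\Phi_\sigma(|x|)\big|_{x_j^*}$; for $x_j^*=0$ we have $w_j^*=1$ and $p_j^*\in[-1,1]$, so $w_j^*p_j^*\in[-1,1]=\partial\big(\Phi_\sigma(|\cdot|)\big)(0)$ because $\Phi_\sigma'(0)=1$. Thus $\vzero\in\lambda\partial J_\sigma(\vx^*)+\vA^\top(\vA\vx^*-\vb)$, i.e.\ $\vx^*$ is a stationary point of \eqref{eqn:uncon}. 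Applying the identical limiting argument to an arbitrary accumulation point (again using $\vx^k-\vx^{k+1}\to\vzero$) shows every accumulation point is stationary.
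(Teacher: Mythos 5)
Your proposal follows essentially the same route as the paper's proof: the same convex-hull/basic-solution argument for boundedness (reducing to supports $S$ with $\vA_S$ of full column rank, where $\vx_S=(\vA_S^\top\vA_S)^{-1}(\vA_S^\top\vb-\lambda\hat\vw_S)$ with weights in $(0,1]^n$, of which only finitely many index sets exist), the same MM-type descent based on concavity of $\Phi_\sigma$, and the same passage to the limit in the optimality condition of \eqref{eqn:uncon_rwl1_xsub} along a further subsequence with bounded subgradients $p_j\in[-1,1]$. The only deviations are minor: you derive monotonicity by comparing subproblem objective values, which drops the term $\tfrac12\|\vA(\vx^k-\vx^{k+1})\|_2^2$ that the paper retains via the optimality condition \eqref{eqn:opt_con}, and you make explicit the verification $\vw^*\odot\vp^*\in\partial J_\sigma(\vx^*)$ (using $\Phi_\sigma'(0)=1$) that the paper leaves implicit; both your argument and the paper's rest on the same, essentially asserted, asymptotic regularity $\vx^k-\vx^{k+1}\to\vzero$.
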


\begin{proof}
We first prove the boundedness of $\{\vx^k\}_{k=1}^\infty$ similar to the proof of Theorem~\ref{thm:con_convg}.
Let $\bar{\h x}$ be an optimal solution for the $\h x$-subproblem in \eqref{eqn:uncon_rwl1_xsub}, and $S$ be the set of corresponding indices of nonzero components in $\bar{\h x}$. We consider the following problem restricted to the index set $S$,
\begin{equation}\label{proof:bdd_sub_prob}
\min_{\h x}~\lambda\sum_{j\in S} w_j|x_j|+\frac 1 2 \|\vA_S\h x_S - \h b\|_2^2, \quad \st~\quad \vx_{S^c}=\mathbf{0},
\end{equation}
of which $\bar\vx$ is an optimal solution. The optimality condition is
\begin{align} \label{eq:opt_uc}
\vM	\vx:=\begin{bmatrix}
\vA_S^\top \vA_S  & \mathbf{0} \\
\mathbf{0} & \vI
\end{bmatrix}
\begin{bmatrix}
\bar\vx_S\\\bar\vx_{S^c}
\end{bmatrix} = \begin{bmatrix}
\vA_S^\top\vb-\lambda\hat\vw_S\\ \mathbf{0}
\end{bmatrix},\end{align}
where $\hat\vw_S=(\vw\odot\sign(\bar \vx))_S$.
%If $\mbox{ker}(\vM)$ has nonzero elements, pick a nonzero $\bigtriangleup \vx$ from its kernel. Then, from the optimality condition~\eqref{eq:opt_c}, we can find $t_1<0$ and $t_2>0$ such that $\bar\vx_S+t\bigtriangleup\vx$ are also optimal solutions for any $t\in[t_1,t_2]$. Note that both $t_1$ and $t_2$ exist, otherwise the optimal objective value will be $+\infty$. Furthermore, we can choose $t_1$ (and $t_2$) such that one component of $\hat\vx:= \bar\vx_S+t_1\bigtriangleup\vx$ (and $\breve\vx :=\bar\vx_S+t_2\bigtriangleup\vx$)  is zero. It also shows that $\bar\vx$ is a weighted average of $\breve\vx$ and $\hat\vx$, both of which have fewer nonzero components. Then we can apply the same technique on $\hat\vx$ and $\breve\vx$ until we end up solutions such that $\mbox{ker}(\vM)=\{\mathbf{0}\}$. Therefore, we have $\bar\vx$ is within a convex hull constructed by all possible solutions of $A_S^\top A_S\vx_S=\vA_S^\top\vb-\lambda \hat\vw_S$ with linearly independent columns and all possible $\hat\vw_S$.
Then using the same technique in the proof of Theorem~\ref{thm:con_convg}, we can show that $\{\vx^k\}_{k=1}^\infty$ is within a convex hull constructed by $\{\vx:\exists S \mbox{ and }\hat\vw\in(0,1]^n\mbox{ such that } \vx_S=(\vA_S^\top \vA_S)^{-1}(\vA_S^\top\vb-\lambda \hat\vw_S)\}$. This set is bounded, so the optimal solution $\bar\vx$ is also bounded.

%all possible solutions of $\vA_S^\top \vA_S\vx_S=\vA_S^\top\vb-\lambda \hat\vw_S$ with linearly independent columns and all possible $\hat\vw_S$.
	
Because $\{\vx^k\}_{k=1}^\infty$ is bounded, there exists a subsequence $\{\vx^{n_k}\}$ convergent to $\vx^*$. It follows from the optimality condition of~\eqref{eqn:uncon_rwl1_xsub} that
\begin{align}\label{eqn:opt_con}
0 =\lambda w_j^{k}p_j^{k+1} + \vA_j^\top(\vA\vx^{k+1}-\vb),
\end{align}
where $p_j^{k+1}\in\partial |x_j^{k+1}|$ and $\vA_j$ is the $j$-th column of $\vA$.
Then we have
	\begin{align*}
	&{1\over2}\|\vA\vx^k-\vb\|_2^2-{1\over2}\|\vA\vx^{k+1}-\vb\|_2^2 \nonumber\\
	=& {1\over2}\|\vA\vx^k-\vA\vx^{k+1}\|_2^2+\langle \vx^k-\vx^{k+1},\vA^\top(\vA\vx^{k+1}-\vb)\rangle\nonumber\\
	=&{1\over2}\|\vA\vx^k-\vA\vx^{k+1}\|_2^2-\lambda\sum_{j=1}^n\langle x_j^k-x_j^{k+1},w_j^{k}p_j^{k+1}\rangle\nonumber\\
	\geq& {1\over2}\|\vA\vx^k-\vA\vx^{k+1}\|_2^2-\lambda\sum_{j=1}^nw_j^{k}(|x_j^k|-|x_j^{k+1}|),%\nonumber\\
	\end{align*}
where the last inequality is guaranteed by the subgradient property. We further obtain that
\begin{align*}
	& \left(\lambda J_\sigma(\vx^k)+{1\over2}\|\vA\vx^k-\vb\|_2^2\right)-\left(\lambda J_\sigma(\vx^{k+1})+{1\over2}\|\vA\vx^{k+1}-\vb\|_2^2\right) \nonumber\\
	=& \lambda\sum_{j=1}^n\left[\Phi_\sigma(x_j^k)-\Phi_\sigma(x_j^{k+1})-w_j^{k}(|x_j^k|-|x_j^{k+1}|)\right]+{1\over2}\|\vA\vx^k-\vA\vx^{k+1}\|_2^2\geq 0.
	\end{align*}
We have $\vx^k-\vx^{k+1}\rightarrow\vzero$ as $k\to\infty$ and $\vx^{n_k+1}\to\vx^*$ as $k\to\infty$.  
Since the sequence $\{p_j^{n_k+1}\}$ is bounded by $\pm 1$, it has a convergent subsequence. Without loss of generality, we assume that $\{p_j^{n_k+1}\}$ converges itself. Thus
%there exists $p_j^*\in\partial |x_j^*|$ for $j=1,\dots,n$ such that
\begin{align*}
0 = &~\lim_{k\rightarrow \infty}\lambda w_j^{n_k}p_j^{n_k+1}+\vA_j^\top(\vA\vx^{n_k+1}-\vb)\\
  = &~\lambda e^{-{(x_j^*)^2\over \sigma^2}}p_j^*+\vA_j^\top(\vA\vx^*-\vb),
\end{align*}
where $p_j^*\in\partial |x_j^*|$.
Hence, $\vx^*$ is a stationary point of~\eqref{eqn:uncon}.
%\jq{How can we guarantee $p_j^{n_k}\to p_j^*$?}
\qed
\end{proof}

%\my{In fact, this result is also true for the constrained case, it has a similar inequality and the result is the same. The only difference is the missing of the term $\|\vA\vx^k-\vA\vx^{k+1}\|^2$. We can put this as a remark. We have this result because the penalty is concave. }

We apply the ADMM to solve~\eqref{eqn:uncon_rwl1_xsub} by introducing an auxiliary variable $\h y$ and splitting the objective function as
\begin{equation}
\min_{\h x, \h y\in \R^n} \lambda\sum_{j=1}^n w_j|x_j|+\frac 1 2 \|\vA\h y - \h b\|_2^2 \quad \st \quad \h x = \h y. \\
\end{equation}
We omit the (outer) iteration index $k$ when the context is clear. The corresponding augmented Lagrangian can be expressed by
\begin{equation}
\mathcal L(\h x, \h y; \h u) := \lambda\sum_{j=1}^n w_j|x_j|+\frac 1 2 \|\vA\h y - \h b\|_2^2 + \frac \delta 2 \|\h x - \h y + \h u\|_2^2, \\
\end{equation}
where $\h u$ is the dual variable and $\delta$ is a positive parameter. The ADMM algorithm involves the following steps:
\begin{eqnarray}\label{amp}
\left\{
\begin{array}{lcl}
\h x_{l+1}&=&\arg\min\limits_{\h x} \mathcal {L}(\h x,\h y_{l};\h u_{l}),\\
\h y_{l+1}&=&\arg\min\limits_{\h y} \mathcal {L}(\h x_{l+1},\h y;\h u_{l}),\\
\h u_{l+1}&=&\h u_{l} + \h x_{l+1} - \h y_{l+1},
\end{array}
\right.
\end{eqnarray}
where the inner iteration is indexed by $l.$ There are closed-form solutions for both subproblems of $\h x$ and $\h y$ given by
\begin{align}
&\h x_{l+1} = \mathrm{shrink} (\h y_l-\h u_l,\frac{\lambda}{\delta} \h w),\\
&\h y_{l+1} = (\vA^\top \vA+\delta I_d)^{-1}(\vA^\top \h b+\delta \h x_{l+1}+\delta \h u_l),
\end{align}
where $I_d$ denotes the identity matrix. The overall algorithm for solving the unconstrained ERF-regularized model is summarized in Algorithm~\ref{alg:uncon}.

\begin{algorithm}[!ht]
	\caption{The iterative reweighted $L_1$ algorithm for solving the unconstrained ERF-regularized model~\eqref{eqn:uncon}. }\label{alg:uncon}
	\begin{algorithmic}[1]
		\STATE{{\bf Input:} $\vA\in \mathbb{R}^{m\times n}, \h b\in \mathbb{R}^{m}$, $ \sigma, \lambda, \delta >0,$ and MaxOuter/MaxInner.}
		\STATE{{\bf Initialization:} $k=1$ and solve for the $L_1$ minimization to get $\h x^1$.}
		\WHILE{$k < $ MaxOuter or other stopping criteria}
		\STATE{$\h w^{k} = \exp\{-(\frac{\h x^k}{\sigma})^2\}$}
		\STATE{ $l = 1, \h y_l=\h x^k, \h u_l = \h 0.$}
		\WHILE{$l < $ MaxInner or other stopping criteria}
		\STATE{$\h x_{l+1} = \mathrm{shrink} (\h y_l-\h u_l,\frac{\lambda}{\delta} \h w^k).$ }
		\STATE{$\h y_{l+1} = (\vA^\top \vA+\delta I_d)^{-1}(\vA^\top \h b+\delta \h x+\delta \h u).$}
		\STATE{$\h u_{l+1}=\h u_{l} + \h x_{l+1} - \h y_{l+1}.$}
		\STATE{$l \leftarrow l+1$.}
		\ENDWHILE
		\STATE{$\h x^{k+1} = \h x_l,\,k \leftarrow k+1$.}
		\ENDWHILE
		\RETURN $\h x^{k}$
	\end{algorithmic}
\end{algorithm}

\section{Experiments}\label{sect:exp}

In this section, we demonstrate the performance of the proposed algorithms  in comparison to the state-of-the-art methods in sparse recovery.
All the numerical experiments are conducted on a Windows desktop with CPU (Intel i7-6700, 3.19GHz) and $\mathrm{MATLAB \ (R2019a)}$. The codes including test data for the experiments will be available when it is published.

\subsection{Noise-free case}
We focus on one type of sparse recovery problems that involves highly coherent matrices, where the standard $L_1$ model does not work well. Following the works of \cite{DCT2012coherence,louYHX14,yinLHX14}, we consider an over-sampled discrete cosine transform (DCT), defined as $\vA= [\h a_1, \h a_2, \cdots, \h a_n]\in \R^{m\times n}$ with
\begin{equation}\label{eq:oversampledDCT}
\h a_j := \frac{1}{\sqrt{m}}\cos \left(\frac{2\pi  j \h w}{F} \right), \quad j = 1, \cdots, n,
\end{equation}
where $\h w$ is a random vector uniformly distributed in $[0, 1]^m$ and $F\in \R$ is a positive parameter to control the coherence in a way that a larger  value of $F$ yields a more coherent matrix. Throughout the experiments, we consider  over-sampled DCT matrices of size $64\times 1024$. The ground truth $\h x\in \R^{n}$ is simulated as an $s$-sparse signal, where  $s$ is the number of nonzero entries. As suggested in  \cite{DCT2012coherence}, we require a minimum separation of $2F$ in the support of $\h x$. The values of non-zero elements follow Gaussian normal distribution i.e., $ (\h x_s)_i \sim \mathcal{N}(0,1), \ i = 1,2,\dots, s.$

We evaluate the performance of  sparse recovery in terms of \textit{success rate}, defined as the number of successful trials over the total number of trials. A  success is declared if the relative error of the reconstructed solution $\h x^\ast$ to the ground truth $\h x$ is less than $10^{-3}$,  i.e., $\|\h x^\ast-  \h x\|_2/\|\h x\|_2 \leq 10^{-3}$.

Figure~\ref{fig:sigma} examines the performance of the ERF regularization with respect to different choices of $\sigma$, which numerically demonstrates that the proposed regularization approaches to the $L_1$ norm for a large value of $\sigma.$ Following from Figure~\ref{fig:sigma}, we choose $\sigma = 0.1, 0.5, 0.5, 1$  for $F=1,5,10,20,$ respectively, and compare the sparse recovery performance among the state-of-the-art methods in Figure~\ref{fig:succRate}. The competing methods are labeled as $L_0$ (IRL1 \cite{candes2008enhancing}), $L_p$ ($p=1/2$ \cite{chartrand07}), TL1 ($a=1$ \cite{zhangX18}), and $L_1$-$L_2$ \cite{louYHX14,yinLHX14}. We observe that the proposed approach is always the best or at least the second best under all coherence and sparsity levels.

	\begin{figure}
	\centering\setlength{\tabcolsep}{2pt}
	\begin{tabular}{cc}
		$F=1$ & $F=5$\\
		\includegraphics[width=0.48\textwidth]{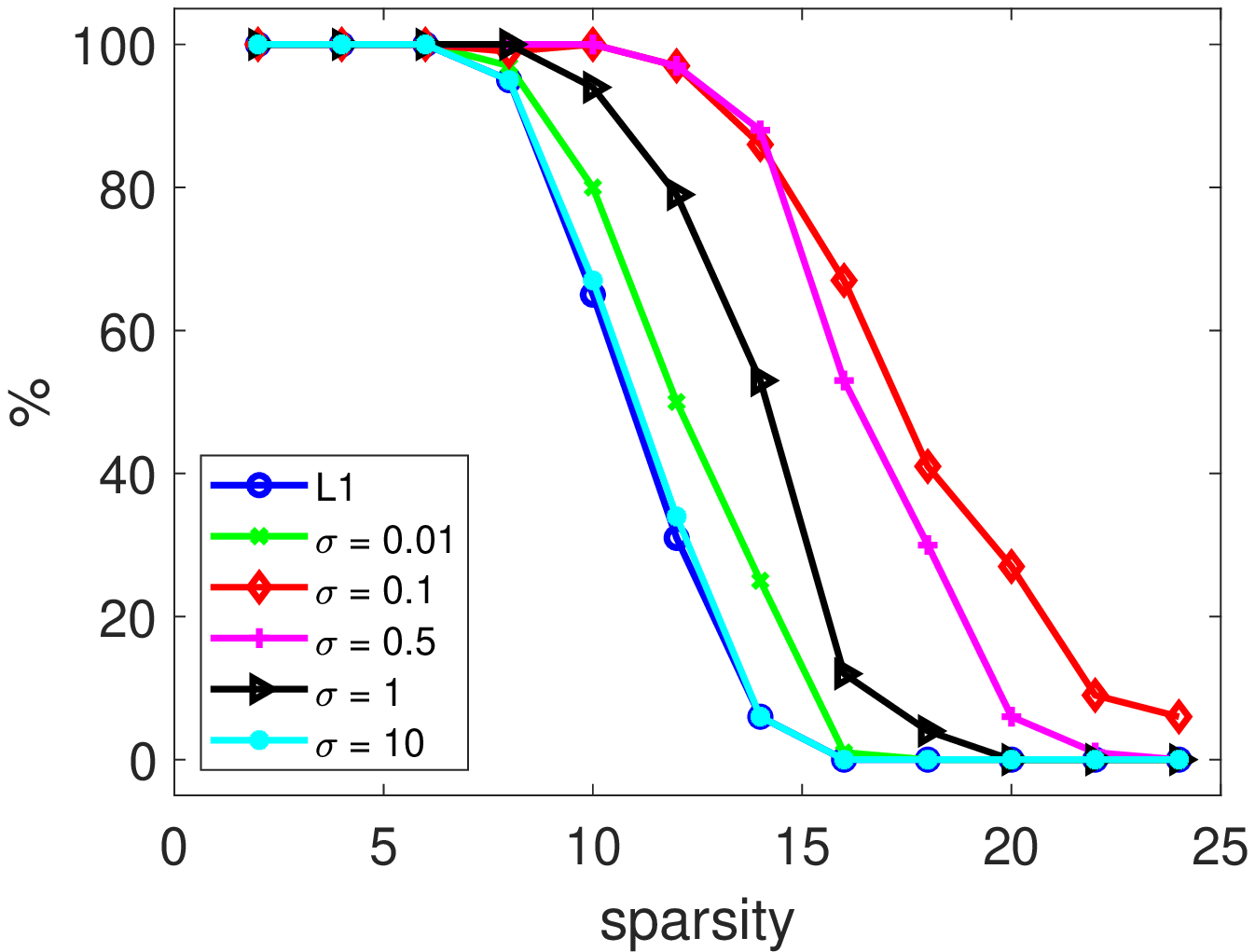} &
		\includegraphics[width=0.48\textwidth]{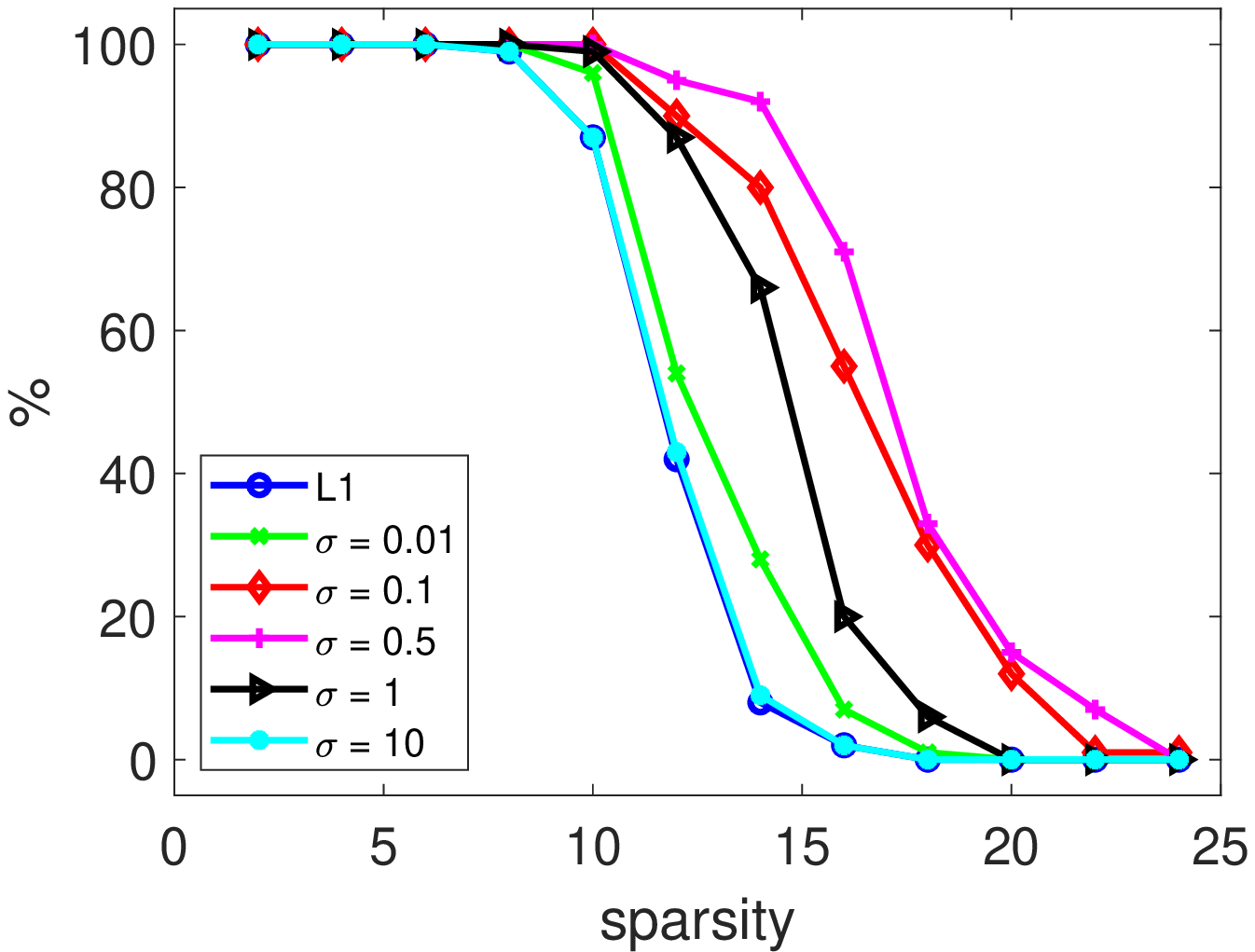}\\
		$F=10$ & $F=20$\\
		\includegraphics[width=0.48\textwidth]{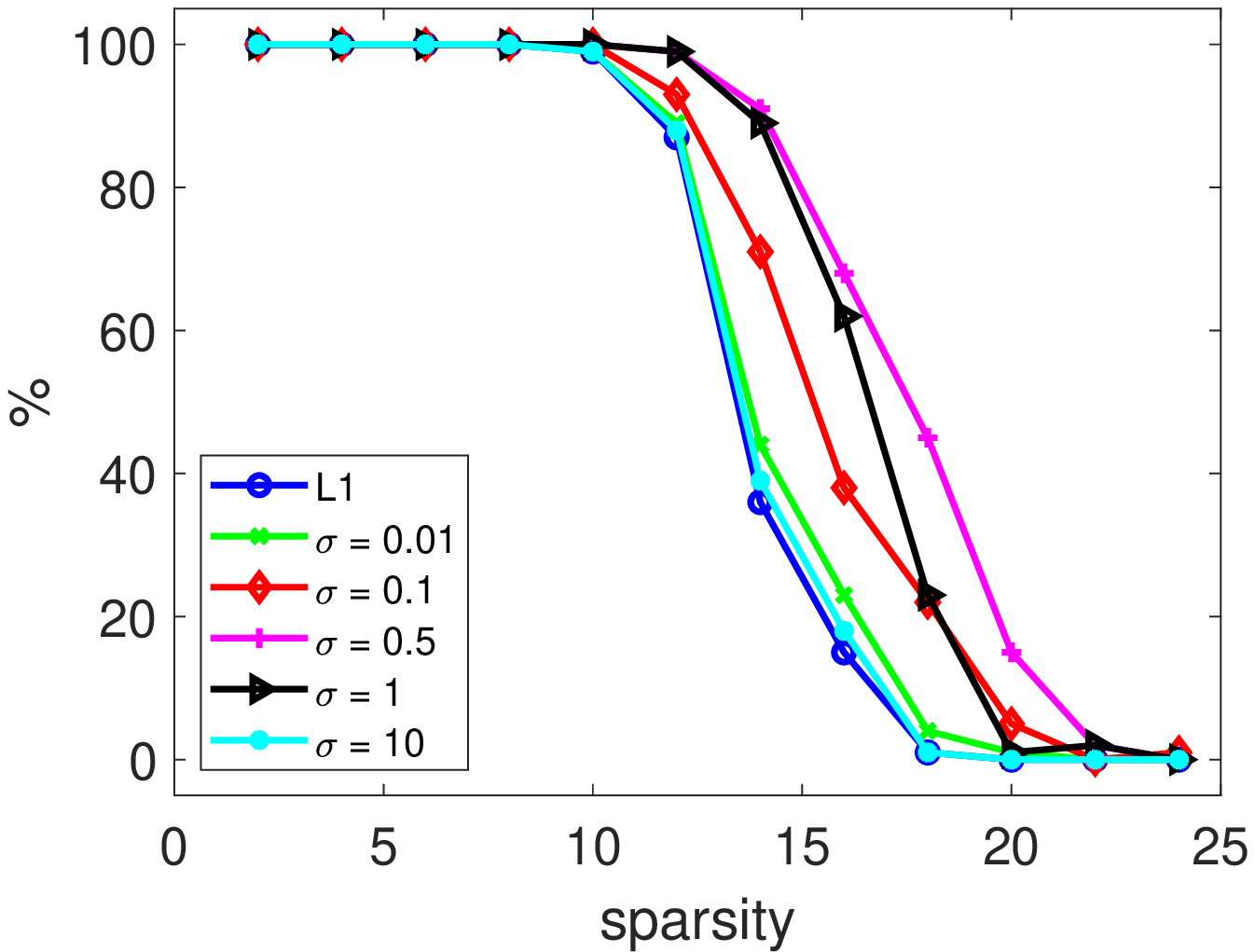} &
		\includegraphics[width=0.48\textwidth]{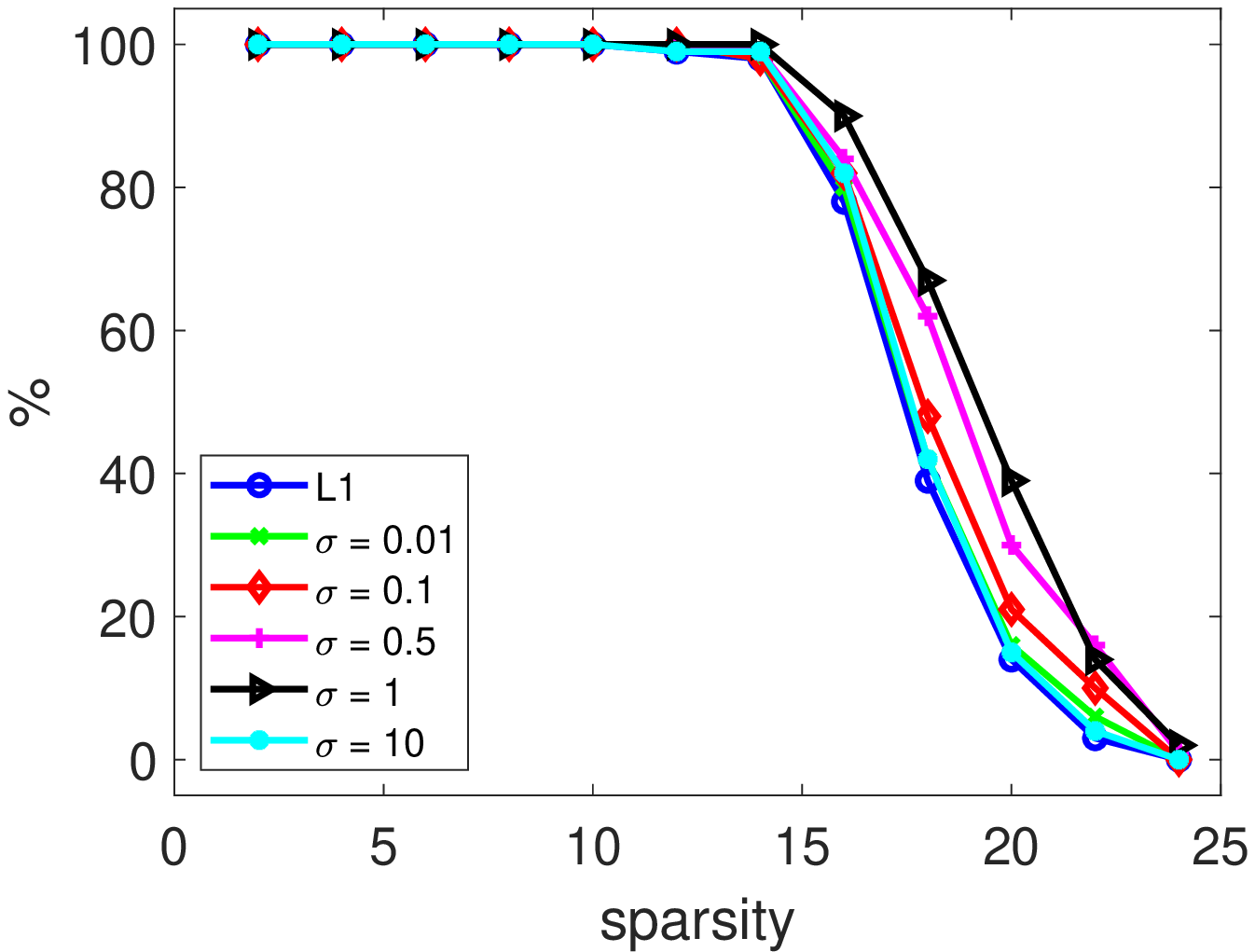}\\
	\end{tabular}
	\caption{The performance of the ERF regularization with respect to the choice of $\sigma$. }
	\label{fig:sigma}
\end{figure}

	\begin{figure}
	\centering\setlength{\tabcolsep}{2pt}
	\begin{tabular}{cc}
		$F=1$ & $F=5$\\
		\includegraphics[width=0.48\textwidth]{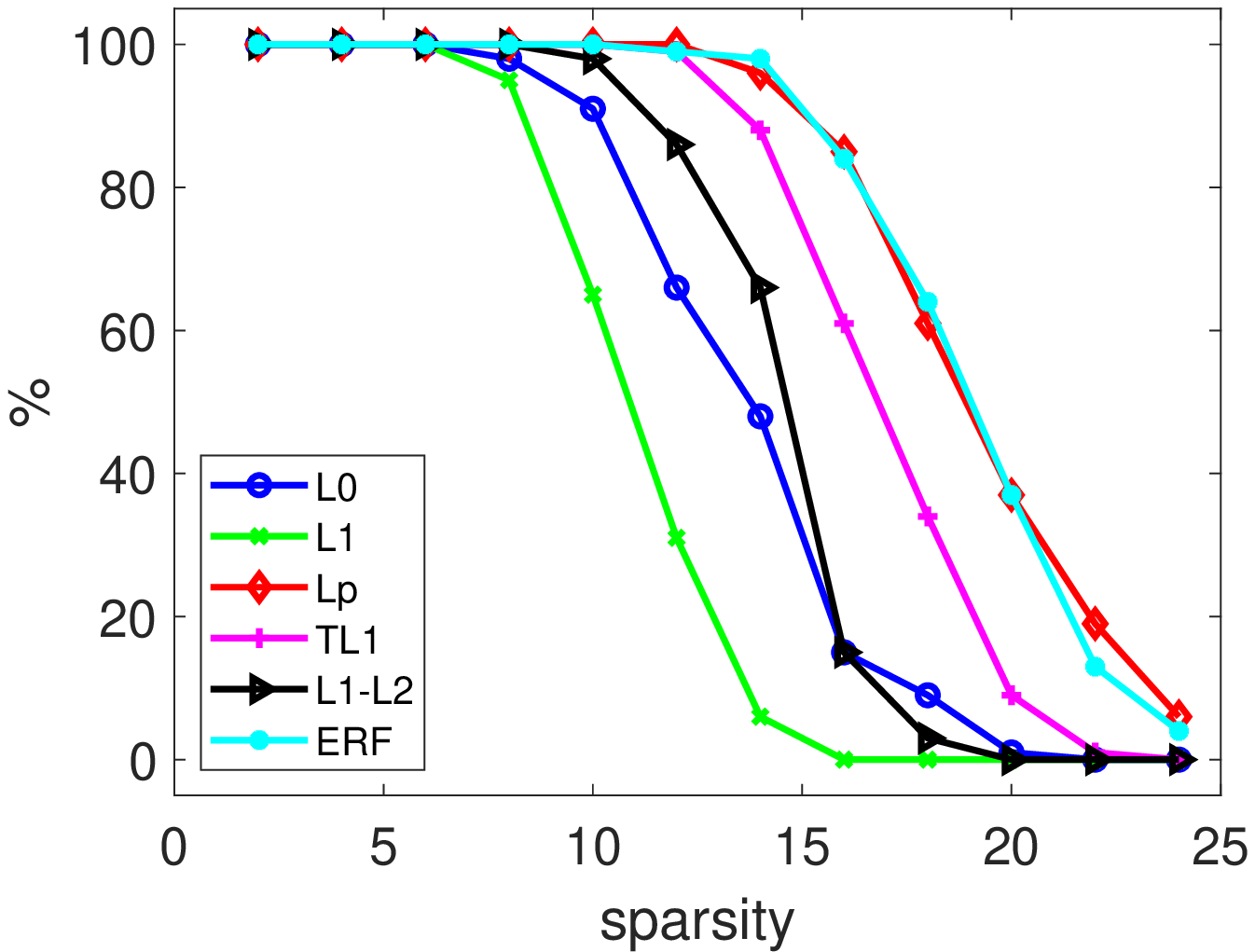} &
		\includegraphics[width=0.48\textwidth]{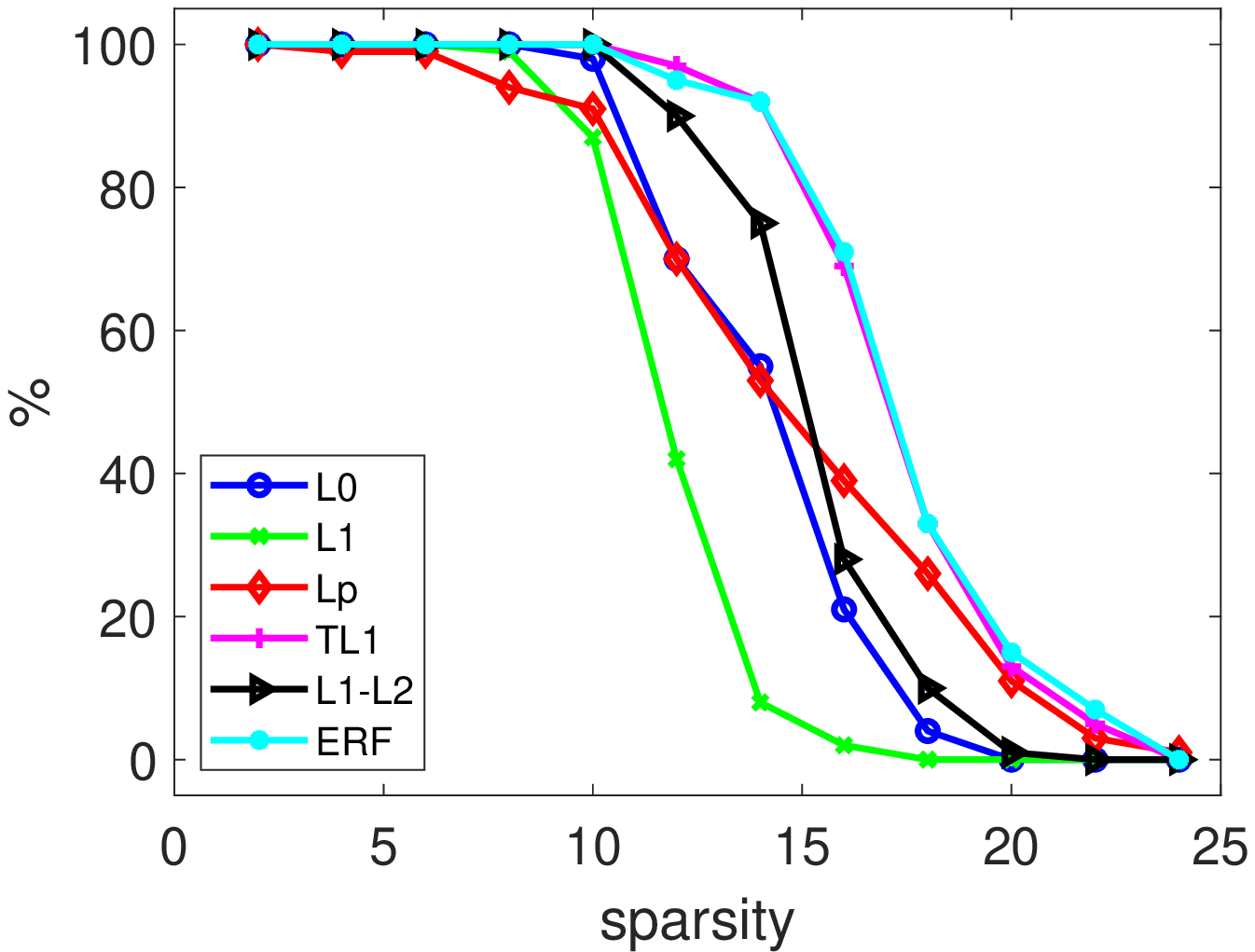}\\
		$F=10$ & $F=20$\\
		\includegraphics[width=0.48\textwidth]{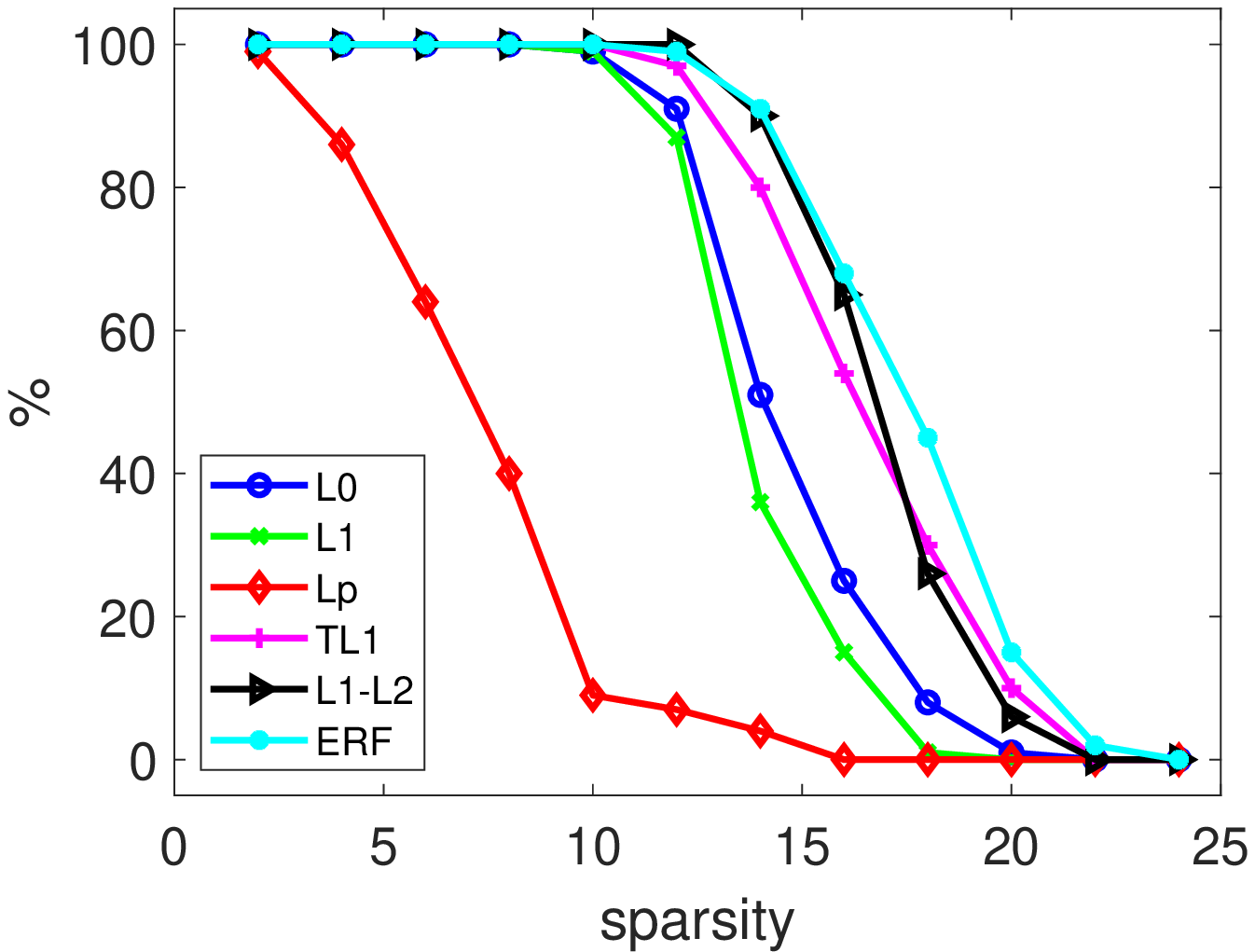} &
		\includegraphics[width=0.48\textwidth]{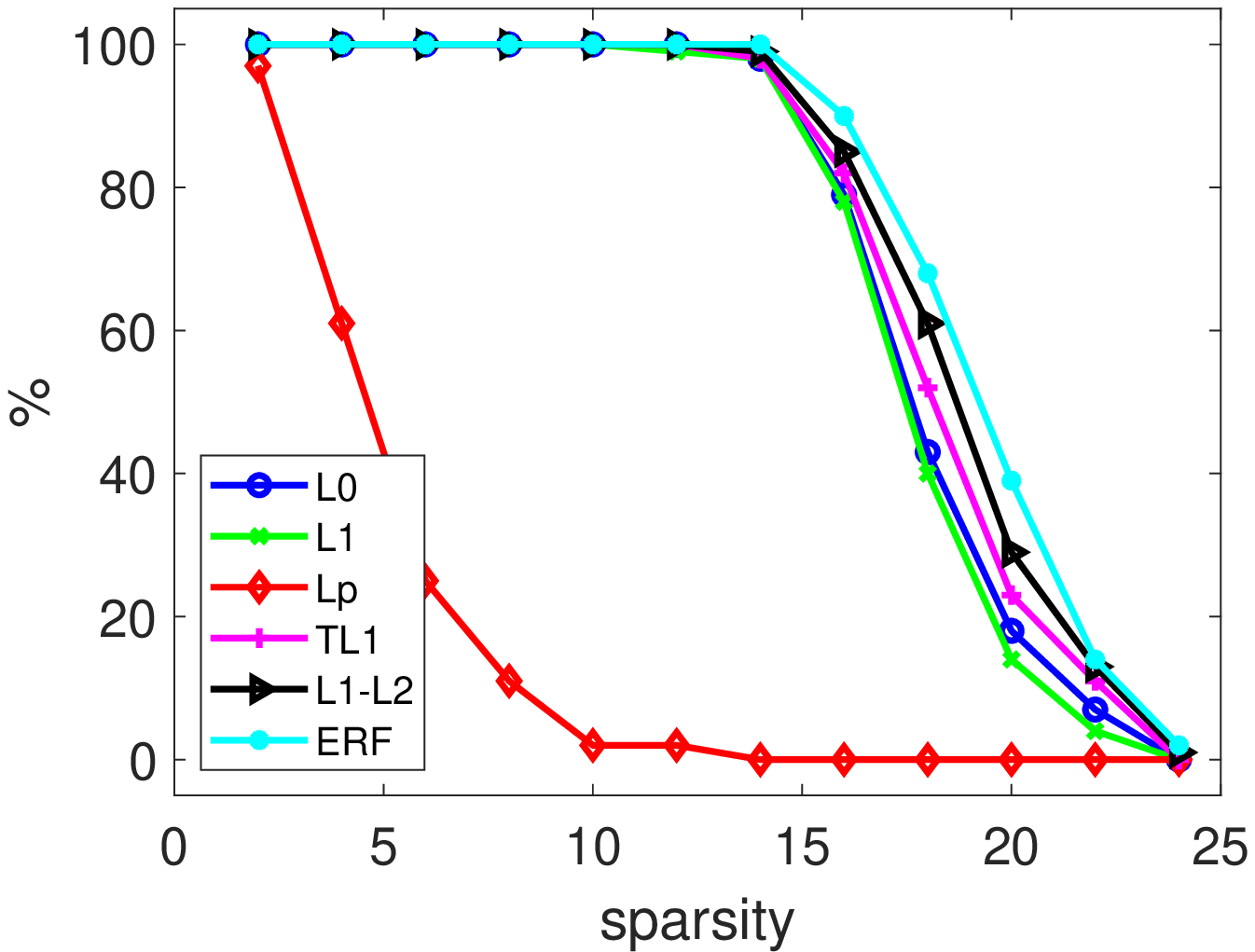}\\
	\end{tabular}
	\caption{The comparison with the state-of-the art methods in sparse recovery. }
	\label{fig:succRate}
\end{figure}

\subsection{Super-resolution}

We also examine the case of super-resolution, in which a coherent sensing matrix is involved.
A mathematical model for super-resolution can be expressed as
\begin{equation}\label{eq:SRdiscrete}
b_k=\frac 1 {\sqrt{N}}\sum_{t=0}^{N-1} x_t e^{-i2\pi kt/N}, \qquad |k|\leq f_c,
\end{equation}
where $i$ is the imaginary unit, $\h x\in\mathbb R^N$ is a vector to be recovered and $\h b\in\mathbb C^n$ is the given low frequency measurements with $n=2f_c+1\ (n<N)$.
This is related to super-resolution in the sense that the underlying signal $\h x$ is defined on a fine grid with spacing $1/N$, while the frequency data of length $n$ imply that one can only expect to recover the signal on a coarser grid with spacing $1/n$. For simplicity, we use matrix notation to rewrite \eqref{eq:SRdiscrete} as $\h b=S_n\mathcal{F} \h x$, where $S_n$ is a sampling matrix that indicates what frequency is collected, $\mathcal F$ is the Fourier transform matrix, and we denote $\mathcal{F}_n=S_n\mathcal{F}$. The frequency cutoff induces a resolution limit inversely proportional to $f_c$; below we set $\lambda_c=1/f_c$, which is referred to as Rayleigh length (a classical resolution limit of hardware \cite{goodman2005introduction}).

We are  interested in reconstructing point sources, i.e.,
$
\h x=\sum_{t_j\in T} c_j\delta_{t_j},
$
where $\delta_{\tau}$ is a Dirac measure at $\tau$, spikes of $\h x$ are located at $t_j$'s belonging to a set $T$, and $c_j$'s are coefficients. Following the work of \cite{candes2014towards}, the sparse spikes are required to be sufficiently separated; please refer to Definition~\ref{def:MS} and Theorem~\ref{thm:MS}.

\begin{definition} {(Minimum Separation)}\label{def:MS}
	Let $\mathbb T$ be the circle obtained by identifying the endpoints on $[0,1]$ and $\mathbb T^d$ the $d$-dimensional torus. For a family of points $T\subset \mathbb T^d$, the minimum separation is defined as the closest warp-around distance between any two elements from $T$,
	\begin{equation}
	\mathrm{MS} :=\triangle (T) := \inf_{(t,t')\in T:t\neq t'} |t-t'|,
	\end{equation}
	where $|t-t'|$ is the $L_\infty$ distance (maximum deviation along any coordinate axis).
\end{definition}

\begin{theorem}{\cite[Corollary 1.4]{candes2014towards}}\label{thm:MS}
	Let $T=\{t_j\}$ be the support of $\h x$. If the minimum distance obeys
	\begin{equation}\label{eq:MSdistance}
	\triangle(T)\geq 2\lambda_c N,
	\end{equation}
	then $\h x$ is the unique solution to $L_1$ minimization:
	\begin{equation}\label{eq:SRviaL1}
	\min \|\h x\|_1 \quad \mbox{s.t.} \quad \mathcal{F}_n\h x=\h b.
	\end{equation}
	If $x$ is real-valued, then the minimum gap can be lowered to $1.87\lambda_c N$.
\end{theorem}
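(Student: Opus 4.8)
Since this statement is quoted verbatim as \cite[Corollary 1.4]{candes2014towards}, I would prove it by the \emph{dual certificate} strategy that underlies essentially all exact-recovery guarantees of this kind. The plan is to lift \eqref{eq:SRviaL1} to the space of measures (equivalently, to work directly with $\|\cdot\|_1$ on the grid), writing $\h x=\sum_{t_j\in T}c_j\delta_{t_j}$ and minimizing total variation subject to $\mathcal F_n\h x=\h b$. By convex (Lagrangian) duality, the assertion that $\h x$ is the \emph{unique} minimizer reduces to the existence of a \emph{dual polynomial}: a trigonometric polynomial $q(t)=\sum_{|k|\leq f_c}\hat q_k\,e^{i2\pi kt}$ of degree $f_c$ (hence lying in the range of $\mathcal F_n^*$, which is exactly dual feasibility) obeying the interpolation conditions $q(t_j)=\sign(c_j)$ on $T$ together with the strict bound $|q(t)|<1$ for all $t\notin T$. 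In this way the whole theorem becomes a single construction problem under the separation hypothesis $\triangle(T)\geq 2\lambda_c N$.

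First I would build $q$ by interpolation against a well-localized kernel. Let $K$ be the squared Dirichlet (Fej\'er-type) kernel associated with the cutoff $f_c$, and set $q(t)=\sum_{t_j\in T}\big[\alpha_j K(t-t_j)+\beta_j K'(t-t_j)\big]$, which is automatically of degree $f_c$. The $2|T|$ coefficients $\{\alpha_j,\beta_j\}$ are pinned down by the $2|T|$ conditions $q(t_j)=\sign(c_j)$ and $q'(t_j)=0$; the derivative conditions force each $t_j$ to be a critical point of $q$, which is what will later let me certify that $|q|$ peaks exactly on the support. Written as a block linear system, the coefficient matrix is a perturbation of the identity whose off-diagonal blocks are governed by the values of $K,K',K''$ at the pairwise gaps $|t_i-t_j|$.

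The core of the argument is then a pair of quantitative estimates. Using the rapid decay of the Fej\'er kernel and its derivatives together with the minimum-separation bound, a Schur/Neumann-series argument shows that the interpolation system is invertible and that its solution is close to the ideal one, namely $\alpha_j\approx\sign(c_j)$ and $\beta_j\approx 0$. I then must verify the strict inequality $|q(t)|<1$ for every $t\notin T$, which is where I expect the real difficulty to lie. I would split the circle into a ``near'' region around each $t_j$ and a ``far'' region: on the near region a second-order Taylor expansion of $|q|^2$, combined with $q(t_j)=\sign(c_j)$, $q'(t_j)=0$, and a strictly negative curvature bound, shows $t_j$ is an isolated strict maximum with value $1$; on the far region the coefficient estimates and kernel tails give $|q|<1$ directly. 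Making every constant explicit is exactly what fixes the admissible threshold at $2\lambda_c N$.

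Finally, the sharper real-valued constant $1.87\lambda_c N$ comes from exploiting that the sign pattern now lives in $\{\pm 1\}$ rather than on the full complex unit circle, which permits a tighter (partly numerically verified) control of the curvature and of the far-field tails within the \emph{same} near/far decomposition; I would simply re-run the two estimates above with refined constants rather than introduce any new mechanism. The main obstacle throughout is the uniform off-support bound $|q|<1$: the decay estimates must be arranged so delicately that the accumulated contribution of all distant spikes never pushes $|q|$ up to $1$, and it is the balance in these tail sums that ultimately dictates how small the minimum separation is allowed to be.
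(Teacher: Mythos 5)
This theorem is imported verbatim from \cite[Corollary 1.4]{candes2014towards}; the paper you are checking against offers no proof of it, so the only meaningful comparison is with the argument in the cited source --- and your sketch reconstructs that argument faithfully. The dual-certificate reduction (a degree-$f_c$ trigonometric polynomial $q$ in the range of $\mathcal{F}_n^*$ with $q(t_j)=\sign(c_j)$, $|q(t)|<1$ off $T$), the interpolation ansatz with kernel translates $K(t-t_j)$ and derivative corrections $K'(t-t_j)$ pinned by $q(t_j)=\sign(c_j)$ and $q'(t_j)=0$, the invertibility of the block interpolation system by diagonal dominance under the separation hypothesis, the near/far split to verify $|q|<1$ (negative-curvature Taylor bound near each $t_j$, kernel-tail bounds far away), and the origin of the sharper real constant $1.87$ in the restricted sign pattern $\{\pm1\}$ --- all of this is exactly the mechanism of the original proof. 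Two points of precision are worth flagging. First, the kernel is not the squared Dirichlet (i.e., Fej\'er) kernel but the \emph{square of the Fej\'er kernel}, a fourth power of a Dirichlet-type kernel: the Fej\'er kernel alone decays only like $t^{-2}$, which is too slow for the accumulated tail sums over distant spikes to close the estimates at separation $2\lambda_c N$, whereas the $t^{-4}$ decay of its square is what makes the constants work. Second, the existence of a dual polynomial with the \emph{strict} off-support bound certifies uniqueness only in combination with injectivity of $\mathcal{F}_n$ restricted to the columns indexed by $T$; in the cited proof this full-column-rank property falls out of the same separation-based estimates, but it is a distinct step you should state rather than fold silently into the duality reduction. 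Neither point changes the architecture of your proposal, which is the correct (and, for the $1.87$ refinement, partly numerically assisted) route.
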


We are interested in the constant in front of $\lambda_c N$ in  \eqref{eq:MSdistance}, referred to as minimum separation factor (MSF). Theorem~\ref{thm:MS} indicates that MSF$\geq 2$  guarantees the exact recovery of $L_1$ minimization. We want to analyze how different sparse recovery algorithms behave with respect to MSF. For this purpose, we consider a sparse signal (ground truth) $\h x_g$  of dimension 1000 with MS = 20.
We vary $f_c$ from 31 to 60, thus MSF$:=\triangle(T) \cdot f_c/N:=$MS$\cdot f_c/N$=$0.62:0.02:1.2$. Denoted $\h x^*$ as the reconstructed signal using any of the methods including $L_1$ via SDP \cite{candes2014towards}, constrained $L_1$-$L_2$ minimization via DCA \cite{louYX16}, and the proposed ERF model via IRL1. we consider 100 random realizations of the same setting to compute the success rates: an incident (or a reconstructed signal $x^*$) is labeled as ``successful'' if $\|\h x^*-\h x_g\|_2/\|\h x_g\|_2<1.5\cdot 10^{-3}$.  Figure~\ref{fig:superRes} shows big advantages of the nonconvex approaches $L_1$-$L_2$ and ERF over the convex $L_1$ approach, while the proposed ERF model is slightly better than $L_1$-$L_2$.

\begin{figure}
	\centering
	\includegraphics[width=0.6\textwidth]{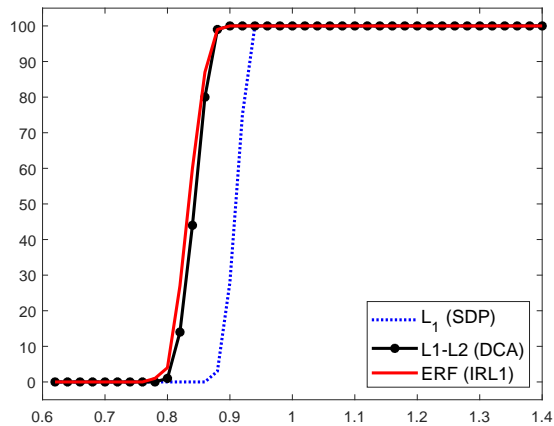}
	\caption{Success rates ($\%$) of fixed MS$=20$ with the ambient dimension $N=1000$.} \label{fig:superRes}
\end{figure}

\subsection{Noisy case}

%\textcolor{red}{WG: Suggest to make the notations such as size of the ground truth, the measurement etc. consistent with the previous section. [We follow the same setting for noisy case in other paper.]}
We provide a series of simulations to demonstrate sparse recovery with noise, following an experimental setup in~\cite{Xu2012}. We consider a signal $\h x$ of length $n=512$ with $s=130$ non-zero elements. We try to recover it from $m$ measurements (denoted by $\h b$) determined by a Gaussian random matrix $\vA$, i.e., a matrix whose columns are normalized with zero-mean and unit Euclidean norm, and Gaussian noise with zero mean and standard deviation $\sigma = 0.1$.  Taking noise into consideration, we use the mean-square-error (MSE) to quantify the recovery performance.
If the support of the ground-truth solution $\h x$ is known, denoted as $\Lambda=\mathrm{supp}(\h x)$, we can compute the MSE of an oracle solution, given by the formula $\sigma^2\mathrm{tr}(\vA_{\Lambda}^T\vA_{\Lambda})^{-1}$, as benchmark.

We compare the proposed ERF model with   $L_{1/2}$  via the half-thresholding method\footnote{We use the author's Matlab implementation with default parameter settings and the same stopping condition adopted as our approach in the comparison.}~\cite{Xu2012}, $L_1$ and $L_1$-$L_2$ (both are solved via ADMM). Each number in Figure~\ref{fig:noisyAll} is based on the average of 100 random realizations under the same setup. When $m$ is small, the sensing matrix becomes coherent, and $L_1$-$L_2$ seems to show advantages and/or robustness over $L_p$ and ERF.  $L_p$ and ERF are asymptotically approaching to the oracle solutions for larger $m$ values.

In Table~\ref{tab:noisy}, we present the mean and standard deviation of MSE and computation time at the four particular $m$ values: 240, 270, 300, and 340. The proposed method achieves the best results, except for larger $m$ value, when the half-thresholding result is the best. But the half-thresholding method is much slower than other competing ones.

\begin{figure}
	\centering
	\includegraphics[width=0.7\textwidth]{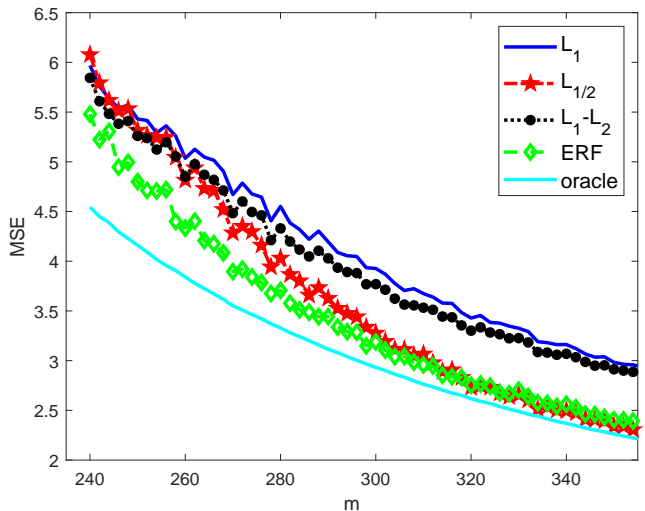}
	\caption{MSE of sparse recovery under the presence of additive Gaussian white noise. The sensing matrix is of size $m\times n$, where $m$ ranges from 240 to 350 and $n = 512$. The ground-truth sparse vector contains 130 non-zero elements. The MSE values are averaged over 100 random realizations.}\label{fig:noisyAll}
\end{figure}

\begin{table}
	\centering
	\begin{tabular}{l||ccc||ccc}\hline\hline
		Methods & $m$ & MSE & Time (sec.) & $m$ & MSE & Time (sec.)  \\\hline
		oracle  &   & \textit{4.54} (0.07) &  &  & \textit{3.55} (0.04) &\\
	$L_{1/2}$~\cite{Xu2012} &  & 6.07 (0.93) & 7.79 (0.92) & &  4.28 (0.66) & 8.40 (2.56)\\
		$L_1$	 & 240&  5.97 (0.75) & 0.19  (0.03) &270 & 4.67 (0.57) &  0.22 (0.03)\\
	$L_1$-$L_2$ &&  5.84 (0.80)& 0.67  (0.06) & & 4.48 (0.58) & 0.81 (0.08) \\
	ERF &  & \textbf{ 5.48} (1.16)& 0.48 (0.08) &&  \textbf{3.90} (0.69) &  0.46 (0.09) \\\hline\hline
		Methods & $m$ & MSE & Time (sec.) & $m$ & MSE & Time (sec.)  \\\hline
oracle  &   & \textit{2.76} (0.02) &  &  & \textit{2.37} (0.02) &\\
$L_{1/2}$~\cite{Xu2012} &  & 3.07 (0.38)&  10.55 (1.91)  & & \textbf{2.50} (0.26) &  11.90 (0.28)\\
$L_1$	 & 310& 3.67 (0.39)& 0.27 (0.05) &340 & 3.16 (0.29) & 0.28 (0.05)\\
$L_1$-$L_2$ && 3.53 (0.37)& 0.96 (0.11) & & 3.07 (0.27)& 1.01 (0.13) \\
ERF &  &  \textbf{2.96} (0.29)& 0.43 (0.06) &&  2.56 (0.24) & 0.40 (0.07)\\\hline
	\end{tabular}
	\caption{Recovery results of noisy signals (mean and standard deviation over 100 realizations). The best results are highlighted in boldface and oracle results are in italics. }\label{tab:noisy}
\end{table}

\section{Conclusions and future works}\label{sect:conclude}
We propose a novel regularization based on the error function for sparse signal recovery. The asymptotic behaviors of the error function indicate that the proposed regularization can approximate the standard $L_0$, $L_1$ norms as the parameter approaches to $0$ and $\infty,$ respectively.
We apply the Newton's method to find a solution for the proximal operator corresponding to the proposed regularization. Plots of asymptotic behaviors and proximal solutions demonstrate that the proposed regularizer is smooth and less biased than the $L_1$ counterpart. We also develop the iterative reweighted algorithms for constrained and unconstrained formulations, both with guaranteed convergence. Experiments demonstrate that the proposed model outperforms the state-of-the-art approaches in sparse recovery in various settings.

Our future work will involve theoretical comparisons between gNSP for the proposed regularizer and NSP for $L_1$. 
%analysis of gNSP regarding to whether gNSP for the error function is less restrictive than NSP for $L_1.$ 
In addition, we will develop alternative numerical schemes to minimize the proposed model, e.g., by using the proximal operator.

\begin{acknowledgements}
This research  was initialized at the American Institute of Mathematics Structured Quartet Research Ensembles (SQuaREs), July 22--26, 2019. The authors would like to acknowledge Dr.~Chao Wang for providing sparse recovery codes. WG was partially supported by NSF DMS-1521582. YL was partially supported by NSF CAREER 1846690. JQ was supported by NSF DMS-1941197. 
\end{acknowledgements}

% BibTeX users please use one of
%\bibliographystyle{spbasic}      % basic style, author-year citations
\bibliographystyle{spmpsci}      % mathematics and physical sciences
\bibliography{refer_l1dl2}

\end{document}